\title{Modular Cauchy kernel corresponding to the Hecke curve}
\author{Nina Sakharova\thanks{The article was prepared within the framework of a subsidy granted to the HSE by the Government of the Russian Federation for the implementation of the Global Competitiveness Program.  }}
\date{}
\newtheorem{theorem}{Theorem}
\newtheorem{lemma}{Lemma}
\theoremstyle{definition}
\theoremstyle{remark}
\newtheorem{remark}{Remark}
\renewcommand{\pmod}[1]{(\textmd{mod}\hspace{.5mm}#1)}
\begin{document}

\maketitle

\begin{center}
\large{ \textit{National Research University Higher School of Economics, Russian Federation \\ Laboratory of Mirror Symmetry, 6 Usacheva str., Moscow, Russia, 119048.\\
saharnina@gmail.com}}
\end{center}
\section{Introduction}

By definition, put
$$\mu_{\gamma}(z_1, z_2)=cz_1z_2+dz_2+az_1+b,$$
where $z_1,z_2$ are arbitrary points  from the upper-half plane $\mathfrak{H} = \left\{z: \Im(z) \geq 0\right\}$ and $\gamma=\left(\begin{matrix}
	a&b\\
	c&d\\
	\end{matrix}\right)$ is an integer matrix
	with the determinant~$m$. By $\Gamma_0(N)$ we denote the Hecke congruence subgroup of $SL_2(\mathbb{Z})$, $$\Gamma_0(N)=\left\{\gamma=\left(\begin{matrix}
	a&b\\
	c&d\\
	\end{matrix}\right)| ~~c \equiv 0 \pmod{N}, ~ \mathrm{det}~\gamma =1\right\},~~~~~~~\Gamma_0(1)=\Gamma=\mathrm{SL}_2(\mathbb{Z})$$ and let $$\Gamma_{\infty} := \left\{ \left(\begin{matrix}
	1&b\\
	0&1\\
	\end{matrix}\right) \in \Gamma \right\}.$$

In this paper, we introduce the modular Cauchy kernel~ $\Xi_N(z_1, z_2)$, i.e. the modular invariant function of two variables with the asymptotics $\Xi_N(z_1, z_2) \sim \dfrac{1}{z_1-z_2}$ as $z_1 \rightarrow z_2$. Here we use this function in two cases. If the genus of the group $\Gamma_0(N)$ is greater than 0, we prove the analogue of the Zagier theorem (\cite{Za3},~ \cite{La})  for the cusp forms of weight 2. In genus zero case, we give an elementary proof of the Borcherds formula  (\cite{Bo}) for the infinite product of the difference of two normalized Hauptmoduls, ~$J_{\Gamma_0(N)}(z_1)-J_{\Gamma_0(N)}(z_2)$.\\

In (\cite{Za3},~ \cite{La}), using the Rankin-Selberg method, Don Zagier proved that the Hecke operator $T_k(m)$ on the space of cusp forms of weight $k>2$ can be defined by a kernel  $\omega_m(z_1,\bar{z_2}, k)$ as follows:
\begin{equation}
\omega_m(z_1,\bar{z_2}, k)= \sum_{ad-bc=m} \frac{1}{\mu_{\gamma}(z_1, -\bar{z_2})^k}=\sum_{ad-bc=m} \frac{1}{\left(cz_1\bar{z_2}+d\bar{z_2}-az_1-b\right)^{k}},
\end{equation}
where the sum is taken over all integer matrices	
	$\left(\begin{matrix}
	a&b\\
	c&d\\
	\end{matrix}\right)$ with the determinant~$m$.\\
\begin{theorem} [D. Zagier]
Let $\Phi_1$ be a fundamental domain for the modular group $\Gamma$ in $\mathbb{H}$ and let  \\ $C_k=\dfrac{(-1)^{k/2} \pi}{2^{k-3}(k-1)};$ then, for every holomorphic cusp form  $f$ of weight $k>2$, we have
$$\int_{\Phi_1}f(z_1)\omega_m(z_1,\bar{z_2}, k)(\Im(z_1))^{k-2} dz_1\bar{dz_1}=f\ast \overline{\omega_m( z_1, \bar{z_2})}=C_k m^{1-k}(T_k(m)f)(z_2), $$ where $f \ast g = \int_{\Phi}f(z) \overline{g(z)} (\Im(z))^{k-2} dz\bar{dz}$ is a Peterson scalar product.
\end{theorem}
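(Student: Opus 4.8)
The plan is to evaluate the pairing $f\ast\overline{\omega_m}$ by the Rankin--Selberg unfolding method: convert the integral over the fundamental domain $\Phi_1$ into a sum of reproducing--kernel integrals over the whole upper half--plane $\mathfrak H$, and then recognise that sum as a Hecke operator. The key algebraic input is a transformation law for $\mu_\gamma$. Writing $\mu_\gamma(z_1,w)=(z_1,\,1)\,\widetilde\gamma\,{}^{t}(w,\,1)$ with $\widetilde\gamma=\bigl(\begin{smallmatrix}c&a\\ d&b\end{smallmatrix}\bigr)$, a direct matrix computation shows that for every $\sigma=\bigl(\begin{smallmatrix}\alpha&\beta\\ \rho&\delta\end{smallmatrix}\bigr)\in\Gamma$ and every integer matrix $\gamma$ of determinant $m$,
$$\mu_{\gamma\sigma}(z_1,w)=(\rho z_1+\delta)\,\mu_\gamma(\sigma z_1,w).$$
Since right multiplication by $\sigma$ permutes the set $\mathcal M_m$ of integer matrices of determinant $m$, summing over $\mathcal M_m$ gives $\omega_m(\sigma z_1,\bar z_2,k)=(\rho z_1+\delta)^k\,\omega_m(z_1,\bar z_2,k)$, so for fixed $z_2$ the kernel is a weight--$k$ holomorphic form in $z_1$ and its conjugate carries the factor $\overline{(\rho z_1+\delta)}^{\,k}$. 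Combined with the modularity of $f$ and the transformation of $(\Im z_1)^{k-2}\,dz_1\,\overline{dz_1}$, this shows that the integrand $f(z_1)\,\overline{\omega_m(z_1,\bar z_2,k)}\,(\Im z_1)^{k-2}$ of the Petersson pairing is $\Gamma$--invariant, which is exactly what licenses the unfolding.

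Next I would write $\mathcal M_m=\bigsqcup_j\gamma_j\,\Gamma$ as a disjoint union of right cosets and use the displayed identity to fold the inner sum over $\Gamma$ against $\Phi_1$. Noting that the generic term of $\overline{\omega_m}$ is $\mu_\gamma(\bar z_1,-z_2)^{-k}$, a short check shows that the $(j,\sigma)$--term of the integrand equals $\sigma^{\ast}$ of the representative $2$--form $\Omega_j=f(z_1)\,\mu_{\gamma_j}(\bar z_1,-z_2)^{-k}\,(\Im z_1)^{k-2}\,dz_1\,\overline{dz_1}$; since the translates $\sigma\Phi_1$ tile $\mathfrak H$, summing over $\sigma$ collapses each coset into a single integral over $\mathfrak H$, yielding
$$f\ast\overline{\omega_m}=\sum_j\int_{\mathfrak H}\frac{f(z_1)\,(\Im z_1)^{k-2}}{\mu_{\gamma_j}(\bar z_1,-z_2)^{k}}\,dz_1\,\overline{dz_1},$$
where one keeps track of the two--fold redundancy coming from $-I\in\Gamma$, which acts trivially on $\mathfrak H$.

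The core of the proof is the evaluation of each integral over $\mathfrak H$. Factoring the linear form as $\mu_{\gamma_j}(\bar z_1,-z_2)=(a_j-c_jz_2)\bigl(\bar z_1-\gamma_j^{-1}z_2\bigr)$, with the point $P:=\gamma_j^{-1}z_2$ lying in $\mathfrak H$, reduces everything to the model integral
$$\int_{\mathfrak H}\frac{f(z_1)\,(\Im z_1)^{k-2}}{(\bar z_1-P)^{k}}\,dz_1\,\overline{dz_1}=c_k\,f(P),\qquad P\in\mathfrak H,$$
for an explicit constant $c_k$. I would prove this by inserting the Fourier expansion $f(z_1)=\sum_{n\ge1}a_ne^{2\pi i n z_1}$, carrying out the inner integral over $\Re z_1$ by closing the contour in the upper half--plane and extracting the order--$k$ pole with $\operatorname{res}$, and then performing the elementary $\Im z_1$--integral. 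Jordan's lemma applies precisely because $f$ is a \emph{cusp} form, so that only frequencies $n\ge1$ occur; this is where cuspidality enters decisively.

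Finally I would reassemble the pieces. The adjugate $\delta_j:=\operatorname{adj}(\gamma_j)$ sends right cosets $\mathcal M_m/\Gamma$ bijectively to left cosets $\Gamma\backslash\mathcal M_m$, acts on $z_2$ as $\gamma_j^{-1}$, and has lower row $(-c_j,\,a_j)$, so that $\dfrac{f(\gamma_j^{-1}z_2)}{(a_j-c_jz_2)^{k}}=(f\vert_k\delta_j)(z_2)$ with the weight--$k$ slash $(f\vert_k\delta)(z)=(c_\delta z+d_\delta)^{-k}f(\delta z)$. Summing over $j$ turns the right--hand side into $\sum_{\delta\in\Gamma\backslash\mathcal M_m}(f\vert_k\delta)(z_2)=m^{1-k}\,(T_k(m)f)(z_2)$, and collecting the constant $c_k$ together with the measure normalisation and the $\pm I$ factor produces exactly $C_k\,m^{1-k}(T_k(m)f)(z_2)$. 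The step I expect to be the main obstacle is the analytic justification of the unfolding: one needs absolute convergence of the series defining $\omega_m$, valid for $k>2$ by comparison with an Eisenstein series, in order to interchange summation with integration and to pass from $\Phi_1$ to $\mathfrak H$; the convergence of the model integral and the legitimacy of the contour shift likewise rest on the cuspidality of $f$ and on the pole $P=\gamma_j^{-1}z_2$ lying in $\mathfrak H$ while $\bar z_1$ stays in the lower half--plane, so that the integrand is regular throughout $\mathfrak H$.
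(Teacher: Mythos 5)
Your proposal targets a statement that the paper itself does not prove: Theorem 1 is quoted as background, attributed to Zagier with references to \cite{Za3} and \cite{La}, so there is no in-paper argument to compare against. What you have written is the standard unfolding proof of that classical result, and it is essentially correct for $k>2$: the cocycle identity $\mu_{\gamma\sigma}(z_1,w)=(\rho z_1+\delta)\,\mu_{\gamma}(\sigma z_1,w)$ is right, the coset decomposition $\mathcal{M}_m=\bigsqcup_j \gamma_j\Gamma$ together with absolute convergence (valid precisely because $k>2$) licenses the unfolding, the factorisation $\mu_{\gamma_j}(\bar z_1,-z_2)=(a_j-c_jz_2)\,(\bar z_1-\gamma_j^{-1}z_2)$ with $\gamma_j^{-1}z_2\in\mathfrak{H}$ is correct, and the model integral is the Petersson reproducing formula. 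One remark in your favour: the integrand must be read as $f(z_1)\,\overline{\omega_m(z_1,\bar z_2,k)}\,(\Im z_1)^{k-2}$, since $f(z_1)\,\omega_m(z_1,\bar z_2,k)$ --- both factors holomorphic of weight $k$ in $z_1$ --- is not $\Gamma$-invariant and the integral over $\Phi_1$ would not otherwise be well defined; you silently adopt the reading under which the statement is Zagier's. The instructive comparison is with the paper's proof of its own Theorem \ref{thm1}, the weight-$2$ analogue for $\Gamma_0(N)$: there the kernel series is only conditionally convergent, so the unfolding you rely on is unavailable. The paper instead regularises with the auxiliary parameter $s$, builds the Cauchy kernel $\Xi_N(z_1,z_2)$ whose $\bar z_2$-derivative reproduces $\omega_N(z_1,\bar z_2)$, and evaluates the pairing by Green's formula on $\Phi_N$ minus a small disc together with the Cauchy integral formula at the diagonal singularity. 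Your closing paragraph correctly identifies absolute convergence as the crux of the unfolding; that is exactly the obstruction the paper's machinery is designed to circumvent at weight $2$.
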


In Section 2  we obtain a proof of generalization of the Zagier theorem. Namely, we prove that there is the analogous integral representation for the Hecke operators on the space of weight 2 cusp form with respect to the $\Gamma_0(N)$. Let $$\mathrm{M}(m, N)= \left\{\left(\begin{matrix}
	a&b\\
	c&d\\
	\end{matrix}\right)| ~ad-bc=m,~~(a, N)=1, ~~ c\equiv 0 \pmod{N}\right\}.$$ Consider the series \begin{equation} \omega_{m, N}(z_1, z_2)=\frac{1}{2} \sum_{\gamma \in \mathrm{M}(m, N)}\frac{1}{\mu_{\gamma}(z_1, -z_2)^2},  \label{ZagmN}
\end{equation} which is not absolutely convergent. We define the convergence as the value of this series in terms of the limit.
\begin{theorem}\label{thm1}
Let $\Phi_N$ be a fundamental domain of the modular group $\Gamma_0(N)$ in $\mathbb{H}$ and let genus of the group $\Gamma_0(N)$ be greater than zero. Then, for every holomorphic cusp form  $f$ of weight 2, we have
\begin{equation} \int_{\Phi_N}f(z_1)\omega_{m, N}(z_1,\bar{z_2})dz_2\overline{dz_2}=f\ast \overline{\omega_{m, N}(z_1, \bar{z_2})}=2 \pi i m^{-1}~(T_2(m)f)(z_1). \label{TN}\end{equation}
 \end{theorem}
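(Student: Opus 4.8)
The plan is to run the Rankin--Selberg unfolding argument, adapted to weight $2$, exactly as in the proof of Theorem~1, with an extra regularization step to cope with the conditional convergence of the series \eqref{ZagmN}. The first move is to rewrite the kernel in a form adapted to the Hecke action. Using the identity $\mu_\gamma(z_1,z_2)=(cz_1+d)(z_2+\gamma z_1)$, where $\gamma z_1=\frac{az_1+b}{cz_1+d}$, one obtains
\[
\omega_{m,N}(z_1,z_2)=\frac12\sum_{\gamma\in\mathrm{M}(m,N)}\frac{1}{(cz_1+d)^2(\gamma z_1-z_2)^2},
\]
so that the factor $(cz_1+d)^{-2}$ is precisely the weight-$2$ automorphy factor. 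I would then decompose $\mathrm{M}(m,N)$ into left cosets $\mathrm{M}(m,N)=\bigsqcup_j \Gamma_0(N)\gamma_j$, where the $\gamma_j=\left(\begin{smallmatrix} a&b\\0&d\end{smallmatrix}\right)$ with $ad=m$, $(a,N)=1$, $0\le b<d$ are the standard upper-triangular Hecke representatives (left multiplication by $\Gamma_0(N)$ preserves the conditions $c\equiv 0\ \pmod N$ and $(a,N)=1$); the prefactor $\frac12$ absorbs the contribution of $-I$. This is exactly the decomposition that defines $T_2(m)$ on $\Gamma_0(N)$-forms.

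Second, I would carry out the unfolding. Grouping the summands of $\omega_{m,N}$ by coset and writing $\gamma=\delta\gamma_j$ with $\delta\in\Gamma_0(N)$, the cocycle relation for $\mu$ together with the weight-$2$ modularity $f(\delta z)=(cz+d)^2 f(z)$ lets one convert $\int_{\Phi_N} f(z_1)\,\overline{(\,\cdot\,)}$ summed over $\delta$ into a single integral over the whole upper half-plane for each representative. In other words, the fundamental domain $\Phi_N$ is unfolded to $\mathfrak H$, and the double sum-and-integral collapses to
\[
f\ast\overline{\omega_{m,N}}=\sum_j \int_{\mathfrak H} f(z_1)\,\overline{\frac{1}{(\gamma_j z_1-\bar z_2)^2}}\,dz_1\,\overline{dz_1}.
\]
Each of these is a reproducing-kernel computation: writing $(z_1-\bar w)^{-2}=-\partial_{\bar z_1}(z_1-\bar w)^{-1}$ and applying Green's/Stokes' theorem, the integral against the holomorphic form $f(z_1)\,dz_1$ is evaluated as a residue, producing the factor $2\pi i$ and the value of $f$ at the relevant point. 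Summing over $j$ reassembles $\sum_j d_j^{-2} f(\gamma_j z_2)$, which is $(T_2(m)f)(z_2)$ up to the normalization $m^{-1}$, giving the constant $2\pi i\,m^{-1}$ of \eqref{TN}; cuspidality of $f$ is what makes the boundary contributions at the cusps vanish.

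The main obstacle is the point I have so far suppressed: at weight $2$ the series \eqref{ZagmN} is only conditionally convergent, so none of the above rearrangements (grouping by cosets, interchanging sum and integral, unfolding) is a priori legitimate, and the weight-$2$ Eisenstein series on each coset sits exactly on the boundary of absolute convergence. I would handle this by Hecke's trick: insert a convergence factor $|\mu_\gamma(z_1,-z_2)|^{-2s}$ (equivalently, work with weight $2+2s$), for which everything converges absolutely and all the manipulations above are valid for $\Re s>0$, and then pass to the limit $s\to 0$ that defines the series. The delicate part is to show that this limit exists and contributes no extra term, and this is precisely where the hypotheses enter: the genus $>0$ assumption makes the space of weight-$2$ cusp forms on which we test nontrivial, and together with the cuspidality of $f$ it ensures that the would-be non-holomorphic/Eisenstein part of the regularized kernel is orthogonal to $f$ and that the cusp boundary terms in the Stokes computation die in the limit. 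Verifying these vanishings and tracking the constant through the regularization is the technical heart of the proof; the algebraic unfolding itself is routine once convergence has been secured.
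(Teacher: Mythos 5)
Your route (coset decomposition, Rankin--Selberg unfolding, reproducing kernel, Hecke's $s$-trick) is genuinely different from the paper's. The paper never unfolds: it first reduces to $m=1$ via $\omega_{m,N}=m^{-1}T_2(m)\,\omega_{1,N}$, and then uses the Cauchy kernel $\Xi_N$ as a potential for the Zagier kernel, namely the identity $\frac{\partial}{\partial\bar z_2}\left(\Xi_N(z_1,z_2)(z_2-\bar z_2)\right)=-\omega_{N}(z_1,\bar z_2)$ of Lemma 2. Green's formula on $\Phi_N\setminus \mathrm{B}(z_1,\varepsilon)$ then converts the Petersson integral into boundary terms: the $\partial\Phi_N$ contribution vanishes because $\Xi_N(z_1,z_2)(z_2-\bar z_2)f(z_2)\,dz_2$ is $\Gamma_0(N)$-invariant in $z_2$, and the circle around the diagonal singularity yields $2\pi i f(z_1)$ by the Cauchy formula. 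All the hard analysis is thus done once, in the analytic continuation of $\Xi_N(z_1,z_2,s)$ and the $\bar\partial$-computation, and no interchange of sum and integral is ever needed in the proof of the theorem itself.

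Your proposal, by contrast, has its gap exactly where you announce the ``technical heart.'' Two concrete problems. First, the unfolded integral is not a routine reproducing-kernel computation at $k=2$: since $|f(z)|=O(y^{-1})$ for a weight-2 cusp form, $\int_{\mathfrak H}|f(z)|\,|z-\bar w|^{-2}\,dx\,dy$ diverges logarithmically near the real axis, so even after unfolding the evaluation must be carried out inside the $s$-regularization (also, $(z_1-\bar w)^{-2}=-\partial_{\bar z_1}(z_1-\bar w)^{-1}$ is false --- the function is holomorphic in $z_1$, so you mean $\partial_{z_1}$, and that derivative does not combine with $f(z_1)\,dz_1$ into an exact form without differentiating $f$). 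Second, and more seriously, the mechanism you invoke for the vanishing of the regularization's residue is wrong: the genus condition only guarantees $S_2(\Gamma_0(N))\neq 0$, i.e.\ non-vacuousness; it has nothing to do with orthogonality to the Eisenstein part. What must actually be proved is that $\lim_{s\to 0}$ of the regularized kernel coincides with the conditionally convergent series $\omega_{m,N}$ with no residual non-holomorphic term in the integration variable (at weight 2 such a term, the analogue of the $-3/(\pi y)$ correction to $E_2$, does appear generically), or else that this term pairs to zero with $f$. That is a computation --- it is precisely the content of the paper's Remark 1 and Lemma 2 --- and it does not follow from cuspidality or the genus hypothesis. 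Until it is done, the interchange of $\lim_{s\to 0}$, the coset sum, and the integral is unjustified and the constant $2\pi i\,m^{-1}$ is not established.
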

From the standpoint of differential geometry, a more natural way to prove this theorem is to contract the analogue of the Cauchy kernel.
 Formally, it will be expressed by the following series
\begin{equation}
\Xi_{N, k}(z_1,z_2)=\frac{1}{2}~ \sum_{\gamma \in \Gamma_0(N)}\dfrac{1}{\mu_{\gamma}(z_1, -z_2)^k\mu_{\gamma}(z_1, -\bar{z_2})^k}. \label{Xi}
\end{equation} If $k=1$, then the series $\Xi_{N, k}(z_1,z_2)$ does not converge absolutely; but it is just at the edge of convergence, since $\sum \limits_{\gamma \in \Gamma_0(N)} \frac{1}{|\mu_{\gamma}(z_1,- z_2)|^{s}|\mu_{\gamma}(z_1,-\bar{z_2})|^{s}}$ converges for any $s$ such that $\Re(s)>1$.\\

Following the lead of E. Hecke, we investigate the series
\begin{equation}
\Xi_N(z_1,z_2, s)=~\frac{1}{2} \sum_{\gamma \in \Gamma_0(N)} \frac{\overline{\mu_{\gamma}(z_1, -z_2)}~ \overline{\mu_{\gamma}(z_1, -\bar{z_2})}}{|\mu_{\gamma}(z_1, -z_2)|^{2s}~|\mu_{\gamma}(z_1, -\bar{z_2})|^{2s}}, \label{Xi2}
\end{equation}
	where $s$ is a complex number. We will prove that there is no pole if $s=1$, and  then we will put  $\Xi_N(z_1,z_2)=\lim_{s \rightarrow 1}\Xi_N(z_1,z_2, s)$ and prove Theorem \ref{thm1}.\\

The second part of the present paper is focused on the genus zero case.

Let  $p=e^{2\pi i z_1}$, $q=e^{2\pi i z_2}$, and let $J_{\Gamma_0(N)}(p)$  be the normalized generator of the function field of $\Gamma_0(N)$. At the beginning of Section 3, we will receive the following result:
\begin{theorem}\label{thm2} Let $$E_{2, N}^{\infty}(z) = \frac{1}{2} \sum_{ \Gamma_{\infty} \backslash \Gamma_0(N)} \frac{ 1}{(cz +d)^k}$$ be a standard Eisenstein series in the cusp $i \infty$. Then
\begin{equation}\left(\Xi_N(z_1,z_2)(z_2-\bar{z_2})-2\pi i~ E_{k, N}^{\infty}(z_1)\right)~dz_1 = d_{z_1} \log~ \left|J_{\Gamma_0(N)}(p)-J_{\Gamma_0(N)}(q) \right|^2. \label{T2}
\end{equation}
 \end{theorem}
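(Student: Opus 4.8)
\section*{Proof proposal}

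The plan is to reduce the stated identity of $(1,0)$-forms to an equality of holomorphic weight-$2$ objects in $z_1$ and then to annihilate the difference using the genus-zero hypothesis. First I would record the factorization
\[
\mu_{\gamma}(z_1,-z_2)=(cz_1+d)(\gamma z_1-z_2),\qquad \mu_{\gamma}(z_1,-\bar z_2)=(cz_1+d)(\gamma z_1-\bar z_2),
\]
which rewrites the kernel as
\[
\Xi_N(z_1,z_2)=\frac12\sum_{\gamma\in\Gamma_0(N)}\frac{1}{(cz_1+d)^2(\gamma z_1-z_2)(\gamma z_1-\bar z_2)} .
\]
Every summand is holomorphic in $z_1$ and carries the automorphy factor $(cz_1+d)^{-2}$, so were the series absolutely convergent $\Xi_N(\cdot,z_2)$ would be a weight-$2$ form in $z_1$; the whole content of the theorem is to control the defect created by the merely conditional convergence. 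Writing $J(z_1):=J_{\Gamma_0(N)}(p)$ and $J(z_2):=J_{\Gamma_0(N)}(q)$, the right-hand side is the holomorphic part of $d_{z_1}\log|J(z_1)-J(z_2)|^2$, namely $\frac{J'(z_1)}{J(z_1)-J(z_2)}\,dz_1$, so it suffices to prove the equality of holomorphic functions $h(z_1)=g(z_1)$, where $h(z_1):=\Xi_N(z_1,z_2)(z_2-\bar z_2)-2\pi i\,E_{2,N}^{\infty}(z_1)$ and $g(z_1):=\frac{J'(z_1)}{J(z_1)-J(z_2)}$.

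Next I would analyse $g$. Since $J(z_1)-J(z_2)$ is $\Gamma_0(N)$-invariant in $z_1$, differentiating $J(\gamma z_1)=J(z_1)$ gives $J'(\gamma z_1)=(cz_1+d)^2J'(z_1)$, hence $g(\gamma z_1)=(cz_1+d)^2g(z_1)$ and $g\,dz_1$ descends to a meromorphic $1$-form on $X_0(N)=\Gamma_0(N)\backslash\mathfrak H^{*}$. Because $J_{\Gamma_0(N)}$ is a normalized Hauptmodul (a degree-one map to $\mathbb P^1$ whose only pole is the cusp $i\infty$), the invariant function $J(z_1)-J(z_2)$ has exactly one simple zero, at $z_1\equiv z_2$, and one simple pole, at $i\infty$. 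Consequently $g\,dz_1=d_{z_1}\log(J(z_1)-J(z_2))$ has a simple pole of residue $+1$ on the diagonal $z_1\equiv z_2$, a simple pole of residue $-1$ at $i\infty$, and is holomorphic elsewhere, including at the remaining cusps, where $J'$ vanishes.

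The crux is the parallel analysis of $h$, and it is here that I expect the main obstacle. I would study $\Xi_N(z_1,z_2,s)$ for $\Re(s)>1$, use its meromorphic continuation and its regularity at $s=1$ (already needed for Theorem~\ref{thm1}) to make $\Xi_N(z_1,z_2)=\lim_{s\to1}\Xi_N(z_1,z_2,s)$ meaningful, and then extract the quasi-modular anomaly produced by the conditional convergence. The expectation is that this anomaly in $z_1$ has exactly the shape of the anomaly of the weight-$2$ Eisenstein series, so that the constant $2\pi i$ together with the normalizing factor $z_2-\bar z_2$ makes $h$ a genuine weight-$2$ modular form. Separating the terms $\gamma=\pm I$ yields $\Xi_N(z_1,z_2)\sim\frac{1}{(z_1-z_2)(z_1-\bar z_2)}$ near the diagonal, so $\Xi_N(z_1,z_2)(z_2-\bar z_2)\sim\frac{1}{z_1-z_2}$ and $h\,dz_1$ has a simple pole of residue $+1$ at $z_1\equiv z_2$, the Eisenstein term being regular there. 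At the cusps one computes $E_{2,N}^{\infty}\to1$ and $\Xi_N(\cdot,z_2)\to0$ at $i\infty$, giving the simple pole of residue $-1$ for $h\,dz_1$ there, while at every remaining cusp the constant term of $\Xi_N(z_1,z_2)(z_2-\bar z_2)$ must cancel $2\pi i$ times that of $E_{2,N}^{\infty}$, so that $h\,dz_1$ is holomorphic. Carrying out this regularization and the cusp-by-cusp expansion is the hard part; it is the two-variable analogue of the (second) Kronecker limit formula and of the transformation law of $E_2$.

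With both computations in hand the theorem is immediate. The $1$-form $(h-g)\,dz_1$ is weight-$2$ modular and has no poles: the residue-$(+1)$ poles on the diagonal cancel, the residue-$(-1)$ poles at $i\infty$ cancel, and the matching of constant terms gives holomorphy at all remaining cusps. Hence $(h-g)\,dz_1\in H^0\!\big(X_0(N),\Omega^1\big)$, equivalently $h-g\in S_2(\Gamma_0(N))$. Since the genus of $\Gamma_0(N)$ is zero this space is trivial, so $h=g$, which is precisely the asserted equality of $(1,0)$-forms; the full differential $d_{z_1}\log|J(z_1)-J(z_2)|^2$ is then recovered by adjoining the complex-conjugate $d\bar z_1$-component.
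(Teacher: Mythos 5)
Your proposal is correct and follows essentially the same route as the paper: the paper likewise compares the polar divisors and residues of the two $1$-forms (simple pole of residue $+1$ on the diagonal $z_1=\gamma z_2$, the pole at $i\infty$, holomorphy at the remaining cusps) and concludes that their difference is a holomorphic modular $1$-form, hence zero because the genus is zero. The regularization and cusp-by-cusp analysis that you flag as the hard part is exactly what the paper supplies beforehand via its Fourier-expansion theorem for $\Xi_N(z_1,z_2)(z_2-\bar z_2)$ in terms of the Eisenstein and Poincar\'e series.
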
 	

During the 1980s, Koike, Norton, and Zagier independently proved the following remarkable formula for an infinite product for the difference of two normalized Hauptmoduls of the group $\Gamma$:
 \begin{equation} J_{\Gamma}(p)-J_{\Gamma}(q)=p^{-1} \prod_{\substack{r>0 \\ r' \in \mathbb{Z}}} (1-p^m q^m)^{c(rr')},  \label{Bo} \end{equation} where $J_{\Gamma}(p)=j(p)-744=\frac{1}{p} + \sum_{n>0}c(n) p^n$.

 In Section 3 we will obtain a Fourier expansion for the Cauchy kernel $\Xi_N (z_1, z_2)$ and, as a corollary of the Theorem \ref{thm2}, an analogue of the formula (\ref{Bo}) for the difference of two normalized Hauptmoduls of the genus zero group $\Gamma_0 (N)$. Let $p_{r', N}(r)$ -- is the n-th Fourier coefficient of the Poincare series with the parameter $r'>0$ given by the formula (\ref{Poincare}).

 \begin{theorem}\label{thm3}  \begin{equation} J_{\Gamma_0(N)}(p)-J_{\Gamma_0(N)}(q)=\left(\frac{1}{p}-\frac{1}{q} \right) \prod_{\substack{r>0 \\ r' >0}}  \prod_{d |(r, r', N)} \left(1-p^rq^{r'} \right)^{p_{1, N}\left(rr'/d^2\right)\cdot d^2/rr'}.  \label{Bo1} \end{equation}
 \end{theorem}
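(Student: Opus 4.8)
The plan is to integrate the differential identity (\ref{T2}) of Theorem \ref{thm2} in the variable $z_1$ and then to exponentiate, reading off the infinite product from the Fourier expansion of the Cauchy kernel in $p=e^{2\pi i z_1}$. Since $J_{\Gamma_0(N)}(p)-J_{\Gamma_0(N)}(q)$ is holomorphic in $z_1$, the antiholomorphic part of $d_{z_1}\log|\,\cdot\,|^2$ contributes nothing to the $dz_1$-component, so (\ref{T2}) is equivalent to the holomorphic identity
$$\frac{\partial}{\partial z_1}\log\bigl(J_{\Gamma_0(N)}(p)-J_{\Gamma_0(N)}(q)\bigr)=\Xi_N(z_1,z_2)\,(z_2-\bar z_2)-2\pi i\,E_{k,N}^{\infty}(z_1).$$
In particular the right-hand side, although assembled from the non-holomorphic kernel $\Xi_N$, is a logarithmic derivative and hence meromorphic in $z_1$; I would take this as the starting point.

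First I would compute the Fourier expansion in $p$ of this right-hand side, which is the expansion of the Cauchy kernel announced at the opening of Section 3. Because the combination equals $\partial_{z_1}\log(J_{\Gamma_0(N)}(p)-J_{\Gamma_0(N)}(q))$, which is holomorphic in $z_2$ as well, its coefficients are honest Laurent series in $q=e^{2\pi i z_2}$; writing
$$\Xi_N(z_1,z_2)\,(z_2-\bar z_2)-2\pi i\,E_{k,N}^{\infty}(z_1)=2\pi i\Bigl(-1+\sum_{r\ge 1}A_r(q)\,p^r\Bigr),$$
the constant $-1$ is forced by the cusp value of $E_{k,N}^{\infty}$ together with the decay of $\Xi_N$ as $z_1\to i\infty$, while the Eisenstein term removes the $q$-independent part of the kernel's coefficients, so that each $A_r(q)$ carries only the polar term $-q^{-r}$ (coming from the pole of $\Xi_N$ at $z_1=z_2$) together with positive powers of $q$. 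Integrating term by term with $dz_1=dp/(2\pi i\,p)$ gives
$$\log\bigl(J_{\Gamma_0(N)}(p)-J_{\Gamma_0(N)}(q)\bigr)=-\log p+\sum_{r\ge 1}\frac{A_r(q)}{r}\,p^r+C(q),$$
and the polar parts, together with $-\log p$, reassemble into $\log(1-p/q)-\log p=\log(1/p-1/q)$, whereas the constant $C(q)$ is pinned down by matching the leading behaviour $J_{\Gamma_0(N)}(p)-J_{\Gamma_0(N)}(q)\sim 1/p$ as $p\to0$, which forces $C(q)=0$. This produces exactly the prefactor $(1/p-1/q)$ of (\ref{Bo1}).

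The substantial step is to show that the positive-$q$ parts of the coefficients $A_r(q)$ exponentiate into the double product of (\ref{Bo1}). Expanding
$$\sum_{r,r'>0}\ \sum_{d\mid(r,r',N)}\frac{p_{1,N}(rr'/d^2)\,d^2}{rr'}\,\log\bigl(1-p^rq^{r'}\bigr)$$
and comparing the coefficient of $p^r$ converts the desired product formula into an explicit identity for the Fourier coefficients of $\Xi_N$. To establish it I would unfold the regularised series $\Xi_N(z_1,z_2,s)$ of (\ref{Xi2}) over the cosets $\Gamma_\infty\backslash\Gamma_0(N)$, identify the resulting Fourier coefficients with the coefficients $p_{r',N}(r)$ of the Poincaré series of parameter $r'$, and then pass to the limit $s\to1$. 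Since the exponents in (\ref{Bo1}) involve only the parameter-$1$ coefficients $p_{1,N}$, the final ingredient is the Hecke-type relation expressing $p_{r',N}(r)$ as a divisor sum $\sum_{d\mid(r,r',N)}(\cdots)\,p_{1,N}(rr'/d^2)$; this relation is precisely what manufactures the inner product $\prod_{d\mid(r,r',N)}$ and the weights $d^2/(rr')$. I expect this last reduction to be the main obstacle: it requires careful control of the non-absolutely convergent series and its $s\to1$ limit, and the congruence conditions defining $\mathrm{M}(m,N)$, namely $(a,N)=1$ and $c\equiv0\pmod{N}$, must be tracked through the coset decomposition in order to produce the correct divisors $d\mid(r,r',N)$.
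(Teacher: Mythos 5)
Your overall strategy --- expand the right-hand side of (\ref{T2}) in powers of $p$ and $q$ using the Fourier expansion of the Cauchy kernel, integrate in $z_1$, match the polar part against $\log(1/p-1/q)$, and exponentiate --- is exactly the route the paper takes, and your bookkeeping of the constant term and of the polar parts is self-consistent (you expand in the region $|p|<|q|$, the paper in $|q|<|p|$; both are formal rearrangements valid where $\Im z_1,\Im z_2$ are large, which is also where the infinite product converges). The problem is that the one step you defer --- the ``Hecke-type relation expressing $p_{r',N}(r)$ as a divisor sum $\sum_{d\mid(r,r',N)}(\cdots)\,p_{1,N}(rr'/d^2)$'', which you correctly identify as the main obstacle --- is precisely where all the content of the theorem sits, and you neither state it precisely nor indicate how to prove it. As written, the proposal reduces (\ref{Bo1}) to an unproved identity, so there is a genuine gap.

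The missing ingredient is not something to be coaxed out of the $s\to1$ regularisation or the coset decomposition of $\mathrm{M}(m,N)$; it is a purely arithmetic fact about Kloosterman sums, namely Selberg's identity
$$K(r,r';c)=\sum_{0<m\mid(r,r',c)}m\,K\!\left(\frac{rr'}{m^2},1;\frac{c}{m}\right),$$
which the paper quotes from Kuznetsov. Substituting this into the explicit Bessel--Kloosterman formula (\ref{Poincare}) for $p_{r',N}(r)$, and then reindexing the resulting triple sum over $c\equiv 0\ (\mathrm{mod}\ N)$ and $m\mid(r,r',c)$ by writing $m=dm_0$ with $d\mid N$ and pulling the common factor $m_0$ out of $r=r_0m_0$, $r'=r_0'm_0$, is what simultaneously manufactures the inner product over $d\mid(r,r',N)$, the exponents $p_{1,N}(rr'/d^2)\,d^2/(rr')$, and --- through the resummation $\sum_{m_0\ge 1}\frac{1}{m_0}\,(p^{r_0}q^{r_0'})^{m_0}=-\log\left(1-p^{r_0}q^{r_0'}\right)$ --- the logarithm of the product itself. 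Two further remarks: the disappearance of the antiholomorphic ($r'<0$) terms, which you infer from holomorphy of $J_{\Gamma_0(N)}(p)-J_{\Gamma_0(N)}(q)$ in $z_2$, is established in the paper by noting that the $r'<0$ Poincar\'e series are weight-$2$ cusp forms and therefore vanish for a genus zero group; and once you know the Selberg identity the reindexing above is routine, so the obstacle you anticipate is closable by citing a classical lemma rather than by any delicate control of the non-absolutely convergent series.
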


We omit some technical details and calculations, since they are quite standard.  Some of the results and some calculations can be found in the preprint \cite{Sa}. \\

Despite some similarity in the formulas, it should be noted that our approach differs from that of Jan Hendrik Bruinier discussed in  \cite{Br1}. During the work on the text of this paper, we found the work \cite{BrKa} of K. Bringmann, B. Kane, S. Lobrich, K. Ono, L. Rolen  related to the very similar issues.\\

The author expresses her deep gratitude to Professor  Andrew Levin
for his inspiring guidance, help and ideas.

\section{The Cauchy kernel $\Xi_N(z_1,z_2, s)$ and the Zagier theorem}

Since the series (\ref{Xi}) does not converge absolutely, we define the sum of the series as the value of the function $\Xi_N(z_1, z_2, s) $ at the point $s = 1$. For this, we prove the following theorem.
    \begin{theorem}\label{thm3} The series $\Xi_N(z_1,z_2, s)$ can be analytically continued to the point  $s=1$. \label{T3}
\end{theorem}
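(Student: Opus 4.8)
The plan is to regard $\Xi_N(z_1,z_2,s)$, for fixed $z_2$, as a weight-$2$ automorphic Poincar\'e series in $z_1$ with spectral parameter $s$, and to run Hecke's continuation argument through its Fourier expansion at the cusps. First I would simplify each summand algebraically. For $\gamma=\left(\begin{smallmatrix}a&b\\c&d\end{smallmatrix}\right)\in\Gamma_0(N)$ one has $\mu_\gamma(z_1,-z_2)=(cz_1+d)(\gamma z_1-z_2)$ and $\mu_\gamma(z_1,-\bar z_2)=(cz_1+d)(\gamma z_1-\bar z_2)$; combined with $\Im(\gamma z_1)=\Im(z_1)/|cz_1+d|^2$ and $|w-\bar z_2|^2=|w-z_2|^2+4\Im(w)\Im(z_2)$ this yields $|\mu_\gamma(z_1,-z_2)|^2=u_\gamma\Im(z_1)\Im(z_2)$ and $|\mu_\gamma(z_1,-\bar z_2)|^2=(u_\gamma+4)\Im(z_1)\Im(z_2)$, where $u_\gamma=|\gamma z_1-z_2|^2/(\Im(\gamma z_1)\Im(z_2))$ is the point-pair invariant. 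Thus each summand equals $(\Im z_1\,\Im z_2)^{1-2s}\,\overline{\mu_\gamma(z_1,-z_2)\mu_\gamma(z_1,-\bar z_2)}\,[u_\gamma(u_\gamma+4)]^{-s}$; its modulus is $\asymp u_\gamma^{1-2s}$, so with the lattice bound $\#\{\gamma:u_\gamma\le U\}\ll U$ the series converges absolutely for $\Re(s)>1$, and under $z_1\mapsto\delta z_1$ it transforms by $(c_\delta z_1+d_\delta)^2|c_\delta z_1+d_\delta|^{4(s-1)}$, i.e. it is a weight-$2$ form that is genuinely modular (as the $1$-form $\Xi_N\,dz_1$) at $s=1$.

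Next I would unfold over the cusps. Writing $\Gamma_0(N)=\bigsqcup_{\mathfrak a}\bigsqcup_{\gamma\in\Gamma_{\mathfrak a}\backslash\Gamma_0(N)}\Gamma_{\mathfrak a}\gamma$, scaling each cusp $\mathfrak a$ to $\infty$, and summing first over the parabolic translations $z_1\mapsto z_1+n$, one obtains a function periodic in $x_1=\Re(z_1)$ whose Fourier expansion is computed by Poisson summation. Every nonzero Fourier coefficient is an absolutely convergent Bessel/Whittaker-type transform of $[u(u+4)]^{-s}$, entire in $s$ and rapidly decaying in $\Im(z_1)$; hence they present no obstruction. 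All the analytic difficulty sits in the zeroth Fourier coefficient, whose secondary power-of-$\Im(z_1)$ term carries a coefficient $\varphi(s)$ that factors as a scattering Dirichlet series for $\Gamma_0(N)$ (the analogue of $\zeta(2s-1)/\zeta(2s)$, with a simple pole at $s=1$) times an archimedean integral coming from the weight-$2$ numerator.

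The crux, and the main obstacle, is to show that this archimedean factor vanishes at $s=1$ and thereby cancels the pole of the scattering series. After the substitution that produces the pure power of $\Im(z_1)$, the weight-$2$ numerator $\overline{\mu_\gamma(z_1,-z_2)\mu_\gamma(z_1,-\bar z_2)}$ contributes (to leading order) the integral $\int_{-\infty}^{\infty}\frac{t^2-1}{(1+t^2)^{2s}}\,dt=\sqrt{\pi}\,\frac{\Gamma(2s-\frac32)}{\Gamma(2s-1)}\cdot\frac{2-2s}{2s-1}$, which has an honest zero at $s=1$; it is precisely the antiholomorphic factor $\overline{(cz_1+d)^2}$ (responsible for the weight $2$) that turns the would-be $\int(1+t^2)^{-2s}$ into this $t^2-1$ numerator and supplies the saving factor $2-2s$. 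Granting this, $\varphi(s)$ is regular at $s=1$, so the constant term -- and hence the whole series $\Xi_N(z_1,z_2,s)$ -- extends holomorphically across $s=1$, and one may set $\Xi_N(z_1,z_2)=\lim_{s\to1}\Xi_N(z_1,z_2,s)$. The genuine work is to carry this cancellation out uniformly over all cusps of $\Gamma_0(N)$ and to control the full scattering matrix rather than the single archimedean model integral; an alternative, less hands-on route is to invoke the known meromorphic continuation of weight-$2$ real-analytic Eisenstein/Poincar\'e series and to compute only the residue at $s=1$, checking it is zero.
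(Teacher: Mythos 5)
Your strategy is viable and your key computation checks out: the identities $|\mu_\gamma(z_1,-z_2)|^2=u_\gamma\,\Im(z_1)\Im(z_2)$ and $|\mu_\gamma(z_1,-\bar z_2)|^2=(u_\gamma+4)\,\Im(z_1)\Im(z_2)$ are correct, as is the evaluation $\int_{\mathbb R}(t^2-1)(1+t^2)^{-2s}\,dt=\sqrt{\pi}\,\tfrac{\Gamma(2s-3/2)}{\Gamma(2s-1)}\cdot\tfrac{2-2s}{2s-1}$, so the weight-$2$ numerator really does supply the factor $2-2s$ that cancels the pole of the scattering Dirichlet series in the constant term. But your route is genuinely different from the paper's. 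The paper never unfolds over cusps or mentions the scattering matrix: it splits the sum by the lower-left entry $c$, handles $c=0$ by hand, and rewrites the $c>0$ part as $\sum_c c^{2-4s}\sum_{a_0d_0\equiv 1\,(c)}S_0\bigl(z_2+\tfrac{a_0}{c},0,2s\bigr)S_2\bigl(z_1+\tfrac{d_0}{c},0,2s\bigr)$ in terms of Weil's classical Kronecker--Eisenstein series $S_n(z,y,s)$, whose continuation and Bessel-function Fourier expansions are quoted from \cite{We1}, \cite{We2}; multiplying the expansions produces Kloosterman sums $K(r,r';c)$, and regularity at $s=1$ is extracted from the Weil bound, which makes $\sum_c K(r,r';c)c^{-2}$ absolutely convergent. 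That computation buys the explicit double Fourier expansion in $p$ and $q$ that Section 3 reuses for the Borcherds product; your argument instead isolates the conceptual reason for the absence of a pole (the archimedean zero of the weight-$2$ Gamma factor), which the paper leaves implicit in the remark that the terms $a(0,r',s,c)$, $a(r,0,s,c)$, $a(0,0,s,c)$ are ``computed in a similar way.''

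Two cautions. First, you dismiss the nonzero Fourier modes as presenting ``no obstruction,'' but $s=1$ sits on the boundary of the half-plane of absolute convergence, and after the archimedean transform the coefficient with $r\neq 0\neq r'$ is essentially $\sum_{c\equiv 0\,(N)}K(r,r';c)\,c^{-2}$ times an entire function of $s$; with the trivial bound $|K(r,r';c)|\le c$ this is $\sum c^{-1}$, which diverges. Nontrivial cancellation in the Kloosterman sums --- Weil's bound, or the spectral input you mention in your closing sentence --- is indispensable for the nonzero modes too, not only for the constant term; this is precisely the paper's main quantitative lemma. Second, the displayed decomposition $\Gamma_0(N)=\bigsqcup_{\mathfrak a}\bigsqcup_{\gamma\in\Gamma_{\mathfrak a}\backslash\Gamma_0(N)}\Gamma_{\mathfrak a}\gamma$ is not a disjoint decomposition (a single cusp already exhausts the group); what you need is the Fourier expansion at each cusp separately via the Bruhat decomposition relative to a fixed pair of cusps, which is exactly the step that generates the Kloosterman sums above. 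With those two points repaired, your argument would constitute a legitimate, more structural alternative to the proof in the text.
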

\emph{Proof.}\\
\begin{enumerate}
\item Splitting the sum (\ref{Xi2}) into subsums with respect to the various values of $c$ and combining each summand with its negation, we get
$$\Xi_N(z_1,z_2, s)=\Xi_N^{0}(z_1,z_2, s)+2~\Xi_N^{>0}(z_1,z_2, s),$$
where the series $\Xi_n^{0}(z_1,z_2, s)$ corresponds  to $c=0$ and the series $\Xi_n^{>0}(z_1,z_2, s)$ corresponds  to $c>0$.

		
\item Case 1. If $c=0$, then either $a=d=1$ or $a=d=-1$, and summation over $b \in \mathbb{Z}$ is unrestricted. Therefore,
\begin{equation}
\Xi_N^{0}(z_1,z_2, s)=2\sum_{ b> 0}\frac{(\bar{z_2}-\bar{z_1}-b)~(z_2-\bar{z_1}-b)}{|z_2-z_1-b|^{2s}~|\bar{z_2}-z_1-b|^{2s}}. \label{Xi0}
\end{equation}

We have $\frac{(\bar{z_2}-\bar{z_1}-b)^n(z_2-\bar{z_1}-b)^n}{|z_1-z_2-b|^{2s}|\bar{z_2}-z_1-b|^{2s}} = \frac{1}{b^{4s-2n}}+O\left(\frac{1}{b^{4s-2n+1}} \right)$ as $b\rightarrow \infty$; hence, the sum (\ref{Xi0})	is absolutely convergent in the half-plane $\Re(s)	
>\frac{3}{4}$. \\

 \item  Case 2, $c > 0$.

Firstly, note that  $$\mu_{\gamma}(z_1,z_2)=c^{-1}\left[(cz_1+d)(cz_2-a)+(ad-bc)\right]=c^{-1}\left[(cz_1+d)(cz_2-a)\right]+1/c.$$

It is easy to check that if the series
\begin{equation}
\tilde{\Xi}_N^{>0}(z_1,z_2, s)=\sum_{\substack{c> 0 \\ c \equiv 0 \pmod{N}}}\sum_{\substack{a,d,\\ad \equiv 1 \pmod c}}\frac{(\overline{\mu_{\gamma}(z_1,-z_2)-1/c})~ \overline{\mu_{\gamma}(z_1,-\bar{z_2})-1/c})}{|\mu_{\gamma}(z_1,-z_2)-1/c|^{2s}~ |\mu_{\gamma}(z_1,-\bar{z_2})-1/c|^{2s}}
\end{equation}
can be analytically continued to some point, then the sum	$\Xi_N^{>0}(z_1,z_2, s)$ can be analytically continued to this point as well.

Therefore, we will consider the following sum
\begin{equation}
\tilde{\Xi}_N^{>0}(z_1,z_2, s)=\sum_{\substack{c> 0 \\ c \equiv 0 \pmod{N}}}\sum_{\substack{a,d,\\ad \equiv 1 \pmod c}}c^{4s-2}\frac{1}{|cz_2-a|^{4s-2}}\frac{(c\bar{z_1}+d)^{2}}{|cz_1+d|^{4s}}. \label{Xit}
\end{equation}

 Consider the classical series \cite{We1}
$$S_n(z,y,s)=\sum_{\nu}^{*}\frac{(\bar{z}+\nu)^n}{|z+\nu|^{2s}}e^{-\nu y},$$
where $n$ is an integer, $y$ is a real number, $s$ is a complex number, and  $\sum \limits^{*}$ denotes the sum taken over all integers $\nu \neq -z$.
 			
 Assume that $a=-a_0+ck$, $d=d_0+lc$, ~$k, l \in \mathbb{Z}$. If $a_0d_0 \equiv 1~ \pmod c$, then $(a_0, c)=1$. Hence, the right-hand side of (\ref{Xit}) equals
$$ \tilde{\Xi}_N^{>0}(z_1,z_2, s)=\sum_{\substack{c> 0 \\ c \equiv 0 \pmod{N}}}\sum_{\substack{1< a_0 < c\\(a_0, c)=1 \\a_0d_0 \equiv 1 \pmod c}}\frac{1}{c^{4s-2n}}S_0\left(z_2+\frac{a_0}{c},~ 0,~ 2s-n \right)S_{2}\left(z_1+\frac{d_0}{c},~ 0,~ 2s \right). \label{SS}$$

The sum ~$\tilde{\Xi}_N^{>0}(z_1,z_2, s)$~ satisfies the periodicity property $\tilde{\Xi}_N^{>0}(z_1+\nu,z_2+\nu', s)=\tilde{\Xi_N^{>0}}(z_1,z_2, s)$ for ~$\nu,\nu' \in \mathbb{Z}$, and hence $\tilde{\Xi}_N^{>0}(z_1,z_2, s)$ has a Fourier expansion of the following form
\begin{equation}
\tilde{\Xi}_N^{>0}(z_1,z_2, s)=\sum_{\substack{c> 0 \\ c \equiv 0 \pmod{N}}}\sum_{\substack{1< a_0 < c\\(a_0, c)=1 \\a_0d_0 \equiv 1 \pmod c}}\sum_{\substack{r \in \mathbb{Z}\\ r'\in \mathbb{Z}}}a(r, r', s, c)e^{2\pi i (rz_1+r'z_2)}, \label{Fourier}
\end{equation}

where the coefficients $a(r, r', s, c)$ are the products of the corresponding Fourier coefficients of $S_0\left(z_2+\frac{a_0}{c},~ 0,~ 2s-n \right)$ and $S_{2}\left(z_1+\frac{d_0}{c},~ 0,~ 2s \right)$. The calculation of the Fourier expansion of $S_n (z, y, s)$, expressed in terms of modified Bessel functions of the second kind (the MacDonald function), can be found in \cite{We2} (VII, 11). Letting $s \rightarrow$ 1, we can receive  for $ r, r '> 0 $:
\begin{equation*}
\lim_{s \rightarrow 1} \sum_{\substack{c> 0 \\ c \equiv 0 \pmod{N}}}\sum_{\substack{1< a_0 < c\\(a_0, c)=1 \\a_0d_0 \equiv 1 \pmod c}}a(r, r', s, c)= -\sum_{\substack{c> 0 \\ c \equiv 0 \pmod{N}}}4\pi^4 rr'  \frac{K(r, r'; c)}{c^2} , \label{Arr'}
 \end{equation*} where the function $K(a,b; n)=\sum\limits_{\substack{1\leq m< n, \\(m,n)=1,\\ mm^\ast\equiv 1 \pmod n}}e^{2 \pi i \left(\frac{am}{n}+\frac{bm^\ast}{n}\right)}$ is a Kloosterman sum.

There is a well-known estimate \cite{We2},~ \cite{Ku} of the Dirichlet series for the Kloosterman sums $\sum_{c> 0}\frac{K(r, r', c)}{c^2}$, which is as follows:

\begin{lemma} 	 	
If $a\geq 1$ is fixed, then for $\Re (s)>3/4$
\begin{equation}
\sum_{n\neq 0}\frac{\left|K(a,b; n)\right|}{\left|n\right|^{2s}}\leq \sqrt{a}~ \sum_{n \neq 0} \frac{d(n)}{\left|c\right|^{2s-1/2}}, \label{Weil}
\end{equation} where $d(c)=\sigma_0(n)$ is the divisor function $d(c)=\sigma_0(n)$.
\end{lemma}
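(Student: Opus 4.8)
The plan is to reduce the stated inequality to the classical Weil bound for Kloosterman sums together with the convergence of a single Dirichlet series. First I would invoke Weil's estimate, which for the sum
\[
K(a,b;n)=\sum_{\substack{1\leq m< n,\\(m,n)=1,\\ mm^\ast\equiv 1 \pmod n}}e^{2 \pi i \left(\frac{am}{n}+\frac{bm^\ast}{n}\right)}
\]
reads $|K(a,b;n)|\leq d(n)\,(a,b,n)^{1/2}\,|n|^{1/2}$, where $(a,b,n)$ denotes the greatest common divisor of $a$, $b$, $n$. This is precisely the input referenced in \cite{We2} and \cite{Ku}, and it is the only deep ingredient in the argument; I would cite it rather than reprove it.

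Second, I would dispose of the gcd factor using the hypothesis that $a\geq 1$ is fixed. Since $(a,b,n)$ divides $a$, we have $(a,b,n)\leq a$, hence $(a,b,n)^{1/2}\leq \sqrt{a}$. Substituting into Weil's bound yields the pointwise estimate
\[
|K(a,b;n)|\leq \sqrt{a}\;d(n)\,|n|^{1/2},
\]
which holds uniformly in $b$. This is the step where the factor $\sqrt{a}$ in \eqref{Weil} is produced, and it is entirely elementary once Weil's bound is granted.

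Third, I would divide by $|n|^{2s}$ and sum over $n\neq 0$, obtaining
\[
\sum_{n\neq 0}\frac{|K(a,b;n)|}{|n|^{2s}}\;\leq\;\sqrt{a}\sum_{n\neq 0}\frac{d(n)\,|n|^{1/2}}{|n|^{2s}}\;=\;\sqrt{a}\sum_{n\neq 0}\frac{d(n)}{|n|^{2s-1/2}},
\]
which is exactly the assertion \eqref{Weil}. To justify that the right-hand side is finite in the stated range, I would recognize it, up to the factor $2$ accounting for $\pm n$, as $\zeta(2s-\tfrac12)^2$ via the identity $\sum_{n\geq 1}d(n)n^{-\sigma}=\zeta(\sigma)^2$ valid for $\Re(\sigma)>1$. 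With $\sigma=2s-\tfrac12$ the condition $\Re(\sigma)>1$ becomes $\Re(s)>\tfrac34$, which is precisely the half-plane in the statement; this also shows the restriction is sharp for convergence.

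The genuinely hard part is the first step: Weil's bound is equivalent to the Riemann Hypothesis for the associated curves over finite fields and admits no elementary proof, so I expect it to be the main obstacle and would treat it as a black box. Everything following it—bounding the common divisor by $a$ and identifying the resulting Dirichlet series with $\zeta(2s-\tfrac12)^2$—is routine, and I would keep the write-up of those reductions brief.
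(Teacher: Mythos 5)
Your proposal is correct and matches the paper's intent: the paper states this lemma without proof, citing Weil and Kuznetsov for exactly the pointwise bound $|K(a,b;n)|\leq d(n)(a,b,n)^{1/2}|n|^{1/2}$ that you take as your black box, and the remaining steps (bounding $(a,b,n)\leq a$, dividing by $|n|^{2s}$, and identifying the majorant with $\zeta(2s-\tfrac12)^2$ for $\Re(s)>\tfrac34$) are the standard reduction the paper leaves implicit. The only nitpick is that Weil's bound \emph{follows from} the Riemann Hypothesis for curves over finite fields rather than being equivalent to it, but this does not affect the argument.
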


Using the Weil estimate and the Dirichlet  series, involving the divisor function,
\begin{equation}
\sum_{c=1}^{\infty}\frac{d(c)}{c^s}=\zeta(s)^2, \label{d(n)}
\end{equation}
we obtain $\sum_{c > 0}\frac{\left|K(r,r'; c)\right|}{\left|n\right|^{2}}\leq \sqrt{r} ~\zeta( 3/2)$, so the coefficient $$\lim_{s\rightarrow 1}\sum_{\substack{c> 0 \\ c \equiv 0 \pmod{N}}}\sum_{\substack{1< a_0 < c\\(a_0, c)=1 \\a_0d_0 \equiv 1 \pmod c}}a(r, r', s, c)$$ is absolutely convergent.

Other nonzero terms containing $a(0, r', s, c)$, ~$a(r, 0, s, c)$ and ~$a(0, 0, s, c)$ are computed in a similar way.

 \end{enumerate}
\begin{flushright}
  $\blacksquare$
\end{flushright}

\textit{Let us define ~$\Xi_N(z_1,z_2)$ as the value of the holomorphic function $\Xi_N(z_1,z_2, s)$ at $s=1$}:
 $$\Xi_N(z_1,z_2) = \lim_{s\rightarrow 1}~ \Xi_N(z_1,z_2, s). $$
 From the behavior of ~$\Xi_N(z_1,z_2, s)$ under modular transformations, we immediately obtain the behavior of the function $\Xi_N(z_1,z_2)$:

$$\Xi_N(\gamma z_1,z_2)= (cz_1+d)^2 ~\Xi_N(z_1,z_2), ~~~\textrm{for}~~~  \gamma=\left(\begin{matrix}
	a&b\\
	c&d\\
	\end{matrix}\right) \in \Gamma.$$

\begin{remark} In order to define the sum $\omega_{m, N}(z_1,\bar{z_2}, k)= \sum\limits_{\gamma \in \mathrm{M}(m, N)}~\dfrac{1}{\mu_{\gamma}(z_1, -\bar{z_2})^k}$ for $k=2$, let us consider the series
\begin{equation}
\lim\limits_{s\rightarrow 1}\Omega_{m, N}(z_1,\bar{z_2},s)=\lim\limits_{s\rightarrow 1}\sum \limits_{\gamma \in \mathrm{M}(m, N)}\frac{\mu_{\gamma}(\bar{z_1},-z_2)^{2}}{|\mu_{\gamma}(z_1, -z_2)|^{2s-2}|\mu_{\gamma}(z_1, -\bar{z_2})|^{2s+2}}=\omega_{m, N}(z_1, z_2).
		\end{equation}
Using the same argument as in the proof of (\ref{T3}), one can prove that the series $\Omega_{m, N}(z_1,\bar{z_2},s)$ can be analytically continued to the point~$s=1$.
Therefore, we can put $\omega_N(z_1,\bar{z_2})=\lim\limits_{s\rightarrow 1}\Omega_{1, N}(z_1,\bar{z_2},s)$.
\end{remark}

The derivatives of the ``almost holomorphic'' ~ function $\Xi_N(z_1,z_2)$ have the following form:

\begin{lemma}
$$\overline{\partial } ~\Xi_N(z_1,z_2)(z_2-\bar{z_2}) = -\frac{12~\varphi (N)}{ N^2~(z_1-\bar{z_1})^2}~d\bar{z_1} -\omega_{ N}(z_1, \bar{z_2}) ~d\bar{z_2} ,$$  where
$\varphi(N)=N~ \Pi_{p|N} \left(1+\frac{1}{p}\right)$ is the Euler function.
\end{lemma}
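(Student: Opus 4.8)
The plan is to differentiate the Hecke-regularized series $\Xi_N(z_1,z_2,s)$ of (\ref{Xi2}) term by term and then let $s\to 1$, being careful that $\overline\partial$ does \emph{not} commute with the regularization. First I would write each summand as
$$\tfrac12\,A^{-s}\bar A^{\,1-s}B^{-s}\bar B^{\,1-s},\qquad A:=\mu_{\gamma}(z_1,-z_2),\quad B:=\mu_{\gamma}(z_1,-\bar z_2),$$
so that $A$ is holomorphic in both variables while $B$ is holomorphic in $z_1$ but antiholomorphic in $z_2$ (its only $\bar z_2$-dependence is $\partial_{\bar z_2}B=-(cz_1+d)$). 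Applying $\partial_{\bar z_1}$ and $\partial_{\bar z_2}$ and recording which factors genuinely depend on $\bar z_1,\bar z_2$ splits every term into two types: a \emph{genuine} term coming from the honest $\bar z_2$-dependence of $B^{-s}$, namely $\tfrac{s}{2}(cz_1+d)A^{-s}\bar A^{\,1-s}B^{-s-1}\bar B^{\,1-s}$, and \emph{anomalous} terms, each carrying a prefactor $(1-s)$ produced by differentiating one of the exponents $\bar A^{\,1-s}$ or $\bar B^{\,1-s}$. Since $A$ and $B$ are both holomorphic in $z_1$, the $d\bar z_1$-coefficient consists \emph{entirely} of anomalous terms.

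For the genuine $d\bar z_2$-term I would use the elementary identity $(cz_1+d)(z_2-\bar z_2)=B-A$, which after multiplication by $(z_2-\bar z_2)$ turns the genuine part into $s\,\Xi_N(z_1,z_2,s)-\tfrac{s}{2}\,\Omega_{1,N}(z_1,\bar z_2,s)$, once the two resulting summands are recognized as those of $\Xi_N$ and of the series $\Omega_{1,N}$ from the Remark (here $\bar B=\mu_{\gamma}(\bar z_1,-z_2)$, so $\bar B^2$ is exactly the numerator of $\Omega_{1,N}$). Letting $s\to1$, these contribute $\Xi_N(z_1,z_2)$ and $-\omega_N(z_1,\bar z_2)$ respectively. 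Thus the whole burden of the proof is to show that the anomalous contributions supply exactly the missing $-\Xi_N(z_1,z_2)$ in the $d\bar z_2$-direction and the explicit constant in the $d\bar z_1$-direction.

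The anomalous terms vanish term by term at $s=1$, but each underlying sum sits exactly at the edge of convergence (homogeneity $2-4s$, critical at $s=1$, cf. the estimate after (\ref{Xi2})), so $(1-s)$ times such a sum tends to \emph{minus} its residue at $s=1$. Two facts organize these limits. First, Theorem~\ref{T3} guarantees that $\Xi_N(z_1,z_2,s)$ is regular at $s=1$, which annihilates every anomalous piece proportional to $\Xi_N(z_1,z_2,s)$ itself. Second, the surviving residues are evaluated by precisely the Fourier/Kloosterman analysis of the proof of Theorem~\ref{T3}: after splitting off $c=0$ and expanding in Fourier series, the residue is read off from the constant coefficient $a(0,0,s,c)$, whose $c$-sum is a Dirichlet series controlled by the Weil bound (\ref{Weil}) together with (\ref{d(n)}). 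The expectation is that the $\bar z_2$-anomaly sum $\tfrac12\sum_\gamma \bar B^2|A|^{-2s}|B|^{-2s}$ has residue exactly $\Xi_N(z_1,z_2)$; since $\Xi_N(s)$ is pole-free this reduces to computing the residue of the explicit correction series $\tfrac12\sum_\gamma \bar B\,(c\bar z_1+d)(z_2-\bar z_2)\,|A|^{-2s}|B|^{-2s}$, and this cancels the stray $\Xi_N$ and leaves $-\omega_N(z_1,\bar z_2)$.

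The hard part will be the $d\bar z_1$-residue. Here both anomalous terms carry the weights $\overline{\partial_{z_1}A}=a-c\bar z_2$ and $\overline{\partial_{z_1}B}=a-cz_2$, and one must show that, after multiplication by $(z_2-\bar z_2)$, their combined residue at $s=1$ collapses to the $z_2$-independent constant $-\tfrac{12\,\varphi(N)}{N^2(z_1-\bar z_1)^2}$. This is the Kronecker-limit-formula step: the residue is governed by the constant term of the associated Eisenstein-type series, whose value is fixed by the covolume $\mathrm{vol}\bigl(\Gamma_0(N)\backslash\mathfrak{H}\bigr)=\tfrac{\pi}{3}[\Gamma:\Gamma_0(N)]$ with $[\Gamma:\Gamma_0(N)]=\varphi(N)$, while the remaining weight factors degenerate on the $r=r'=0$ stratum to $\tfrac{1}{(z_1-\bar z_1)^2}$. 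Pinning down this constant exactly — both the numerical factor $12\,\varphi(N)/N^2$ and the sign — is the delicate point; everything else is the bookkeeping of the term-by-term differentiation together with the identity $(cz_1+d)(z_2-\bar z_2)=B-A$.
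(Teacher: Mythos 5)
Your overall strategy --- differentiate the regularized series (\ref{Xi2}) term by term, isolate the contributions carrying a prefactor $(1-s)$, use the identity $\mu_{\gamma}(z_1,-\bar{z_2})-\mu_{\gamma}(z_1,-z_2)=(cz_1+d)(z_2-\bar{z_2})$ to recognize $\Xi_N(s)$ and $\Omega_{1,N}(s)$, and evaluate the $\bar{z_1}$-residue through the covolume of $\Gamma_0(N)$ --- is exactly the paper's, and your identification of the ``genuine'' $\bar{z_2}$-term with $s\,\Xi_N(z_1,z_2,s)-\tfrac{s}{2}\,\Omega_{1,N}(z_1,\bar{z_2},s)$ is correct. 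But there is a concrete bookkeeping error in the $d\bar{z_2}$-direction. The lemma computes $\overline{\partial}$ of the \emph{product} $\Xi_N(z_1,z_2)(z_2-\bar{z_2})$, so the product rule contributes an extra term $-\Xi_N(z_1,z_2)\,d\bar{z_2}$ from the explicit factor $(z_2-\bar{z_2})$; in the paper this appears as the term $-(2s-1)(z_2-\bar{z_2})^{2s-2}\Xi_N(s)$ coming from the exponent $2s-1$. You instead compute $\bigl(\partial_{\bar{z_2}}\Xi_N(s)\bigr)\cdot(z_2-\bar{z_2})$, omit that term, and therefore assign the anomalous $(1-s)$-pieces the job of producing the missing $-\Xi_N$. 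That forces your claim that $\tfrac12\sum_{\gamma}\bar{B}^{2}|A|^{-2s}|B|^{-2s}$ has residue exactly $\Xi_N(z_1,z_2)$ at $s=1$, which cannot be true: as you yourself note two sentences earlier, a residue at the edge of convergence is read off from the \emph{constant} Fourier coefficient $a(0,0,s,c)$ and is therefore a degenerate function of $\Im(z_1)$, $\Im(z_2)$; it cannot reproduce the full kernel $\Xi_N$, which has nonconstant Fourier coefficients and a pole along $z_2=\gamma z_1$. So the subgoal you set yourself is false and the proof as planned cannot close.

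Once the product-rule term is restored the accounting does close: the three occurrences of $\Xi_N(s)$ --- $+s\,\Xi_N(s)$ from the genuine term, $-(1-s)\,\Xi_N(s)$ from splitting the anomalous term via $(c\bar{z_1}+d)(z_2-\bar{z_2})=\bar{A}-\bar{B}$, and $-\Xi_N(s)$ from differentiating the factor $(z_2-\bar{z_2})$ --- cancel identically in $s$, and what must be proved about $\tfrac12\sum_{\gamma}\bar{B}^{2}|A|^{-2s}|B|^{-2s}$ is that its residue at $s=1$ is \emph{zero}, so that the $(1-s)$ prefactor annihilates it and only $-\tfrac12\Omega_{1,N}(z_1,\bar{z_2},1)=-\omega_{N}(z_1,\bar{z_2})$ survives. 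That is the statement the paper actually uses, and it is the one accessible to the $a(0,0,s,c)$ analysis you propose. Your plan for the $d\bar{z_1}$-component matches the paper's in substance (the paper likewise only asserts the residues $\tfrac32\varphi(N)/(\Im(z_1)\Im(z_2))$ of the two auxiliary series without computing them), but be aware that you have not pinned down the constant, and the paper itself is inconsistent about the factor $N^{2}$ between the statement of the lemma and the last display of its proof.
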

\begin{proof}
The computation of the derivative  with respect to $\bar{z_1}$ consists of the termwise differentiation and the calculation of the asymptotics  of the two series $$-\frac{1}{2} \sum_{\gamma \in \Gamma_0(N)}\frac{1}{|\mu_{\gamma}(z_1, -z_2)|^{2s}|\mu_{\gamma}(z_1, -\bar{z_2})|^{2s-2}}~~~\mathrm{ and} ~~~-\frac{1}{2}\sum_{\gamma \in \Gamma_0(N)}\frac{1}{|\mu_{\gamma}(z_1, -z_2)|^{2s-2}|\mu_{\gamma}(z_1, -\bar{z_2})|^{2s}}.$$ The last series are holomorphic for $\Re(s)>1$ with the simple poles at $s=1$, of residues $\dfrac{3 }{2}\dfrac{\varphi(N)}{\Im(z_1)\Im(z_2)}$. Therefore, $$\lim\limits_{s \rightarrow 1}\frac{d}{d\bar{z_1}}(z_2-\bar{z_2})^{2s-1}\Xi_N(z_1,z_2,s) = -\frac{12~\varphi(N)}{(z_1-\bar{z_1})^2}.$$\\

 The termwise differentiation by  $\bar{z_2}$ gives
  \begin{multline}
\frac{d}{d\bar{z_2}}~\Xi_N(z_1,z_2,s)(z_2-\bar{z_2})^{2s-1}=(z_2-\bar{z_2})^{2s-2}\left((1-s)\sum_{\gamma \in \Gamma_0(N)}\frac{\mu_{\gamma}(\bar{z_1},-z_2)^{2}}{|\mu_{\gamma}(z_1, -z_2)|^{2s}|\mu_{\gamma}(z_1, -\bar{z_2})|^{2s}}- s~\Omega_{1, N}(z_1, \bar{z_2},s) \right).
\end{multline}
It follows that
\begin{equation}
\lim_{s \rightarrow 1} \frac{d}{d\bar{z_2}}~2~\Xi_1(z_1,z_2,s)(z_2-\bar{z_2})^{2s-1}=-\frac{1}{2}~\sum_{\gamma \in \Gamma_0(N)}\frac{1}{\mu_{\gamma}(z_1, \bar{z_2})^2}=-\omega_N(z_1, \bar{z_2}). \label{Dz_2Xi}
\end{equation}
 By the same argument, it can be shown that the function $\omega_N(z_1,\bar{z_2})$ is holomorphic with respect to $z_1$ and antiholomorphic with respect to $z_2$.
   \end{proof}

\emph{Proof of Theorem 2.} \\
 Note that it is sufficient to prove this theorem for the case of the determinant $m=1$, since $$\omega_{m, N}(z_1, z_2)=m^{-1} T_2(m)\omega_{1, N}(z_1, z_2),$$
  where the Hecke operator $T(m)$ acts on the first variable $z_1$.\\

 If $f$ is a cusp form of weight 2 with respect to the group $\Gamma_0(N)$, then  $T_2(1) f(z) = f(z)$  and we need to check that   $$\int_{\Phi_N}f(z_2)\omega_N(z_1,\bar{z_2})dz_2\overline{dz_2}=2 \pi i f(z_1).$$
Let $\mathrm{B}(a, \varepsilon)$ be a circle of radius $\varepsilon$ centered in $z=a$. We have
  \begin{multline} \int_{\Phi_N} \omega_{ N} (z_1, \bar{z_2})f(z_2) ~dz_2 d\bar{z}_2  = \lim\limits_{\varepsilon \rightarrow 0}~ \int_{\Phi_N \backslash \mathrm{B}(z_1, \varepsilon)} \frac{\partial}{\partial \bar{z_2}} \left(\Xi_N(z_1, z_2)(z_2-\bar{z_2}) f(z_2) \right) dz_2 d\bar{z}_2  = \\ = \int_{\partial \Phi_N}  \Xi_N(z_1, z_2)(z_2-\bar{z_2}) f(z_2)~dz_2 - \lim\limits_{\varepsilon \rightarrow 0}~ \int_{\partial \mathrm{B}(z_1, \varepsilon)} \Xi_N(z_1, z_2)(z_2-\bar{z_2}) f(z_2) ~ dz_2, \end{multline} where the last equality follows from the Green's formula. The first integral is equal to zero, since the integration is over the boundary of the fundamental region and the integrand is a modular-invariant function. Note that $$\Xi_N(z_1, z_2)=\frac{1}{(z_2-z_1)(\bar{z_2}-z_1)}+\frac{1}{2} \sum_{\substack{\gamma \in \Gamma_0(N),\\ \gamma \neq I}}\frac{1}{\mu_{\gamma}(z_1, -z_2)\mu_{\gamma}(z_1, -\bar{z_2})},$$ so the second integral $\int_{\partial \mathrm{B}(z_1, \varepsilon)} \Xi_N(z_1, z_2) f(z_2)~dz_2$ is equal to $- 2 \pi i f(z_1)$ by the Cauchy formula.
\begin{flushright}
$\blacksquare$
\end{flushright}

\section{The Cauchy kernel $\Xi_N(z_1,z_2, s)$ and the Borcherds Products}

In this section we calculate the Fourier coefficients of the Cauchy kernel $\Xi_{N}(z_1, z_2)$, then we prove Theorem \ref{thm2}. As a simple corollary of the Fourier expansion, we obtain Theorem \ref{thm2}.\\

 Consider the standard nonholomorphic Eisenstein series, corresponding to the cusp $i \infty$:
      \begin{equation} E_N^{\infty}(z, s)=\frac{1}{2} \sum_{\gamma \in \Gamma_{\infty}\backslash \Gamma_0(N)}\frac{(c\bar{z}+d)^2}{|cz+d|^{2s+2}}.
      \end{equation} We define the weight 2 series as a limit: $E_{2, N}^{\infty}(z_1)=\lim_{s\rightarrow 1} ~E_N^{\infty}(z_1, s)$.

The calculation of the Fourier coefficients of the non-holomorphic Eisenstein series, as well as the proof of analytic continuation to the point $s = 1$, is rather standard and we omit it. For the non-holomorphic Eisenstein series,  this expansion in powers of $p=e^{2\pi i z_1}$ is the following:
              \begin{multline}
     \lim_{s\rightarrow 1} ~E_N^{\infty}(z_1, s)=  1-\frac{3~\varphi(N)}{\pi~ N^2 ~\Im(z_1)} - 4 \pi^2\sum_{r>0} r p^r \sum_{\substack{c \equiv 0\pmod{N} \\ c>0}} \frac{1}{c^2} \sum_{l| c,~ l|r} \mu\left(\frac{c}{l}\right)l = \sum_{r=0}^{\infty}e_{r, N} p^r, \label{Eisenst}
           \end{multline} where $\mu\left(n\right)$ is the Mobius function.

The Poincare series in the cusp $i\infty$ with a complex parameter $s$ are defined by the following formula \begin{equation} P^{\infty}_{N, r'}(z, s)=\frac{1}{2}\sum_{\gamma \in \Gamma_{\infty} \backslash \Gamma_0(N)}\frac{e^{-2 \pi i r' \gamma z_1}}{(cz+d)^2|cz+d|^{2s-2}}.
      \end{equation}

    These series have the Fourier decomposition depending on the sign of the parameter $r'$:
     \begin{lemma} Let $$\lim_{s\rightarrow 1} P_{r'}(z_1, 2, s+1)= e^{2\pi i r' z_1}+\sum_{r>0}p_{ r', N}(r)e^{2 \pi i r z_1},$$ then for $r'>0$:
       \begin{equation}
           p_{r', N}(r)=-2\pi \sqrt{\frac{r}{r'}}\sum_{\substack{c \equiv 0\pmod{N} \\ c>0}}\frac{K(-r, r',c)}{c}I_1\left(\frac{4 \pi \sqrt{rr'}}{c}\right).
           \label{Poincare}
       \end{equation} and for $r'<0$:
       \begin{equation} p_{r', N}(r)=-2\pi \sqrt{\frac{r}{r'}}\sum_{\substack{c \equiv 0\pmod{N} \\ c>0}}\frac{K(-r, r',c)}{c}J_1\left(\frac{4 \pi \sqrt{rr'}}{c}\right),\end{equation} где  $$I_{1}(z)= \frac{z}{4\pi i} \int_{c-i \infty}^{c+ i \infty} t^{-2}e^{t+z^2/4t} dt, ~~~~J_{1}(z)= \frac{z}{4\pi i} \int_{c-i \infty}^{c+ i \infty} t^{-2}e^{t-z^2/4t} dt$$ are the Bessel functions of the first kind (modified and unmodified, respectively).
     \end{lemma}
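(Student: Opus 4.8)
The plan is to carry out the classical unfolding computation of the Fourier coefficients of a Poincar\'e series, adapted to this weight-$2$, level-$N$ real-analytic setting, and then to read off the two Bessel functions from the sign of $r'$. A right coset in $\Gamma_\infty\backslash\Gamma_0(N)$ is determined by its bottom row $(c,d)$ with $\gcd(c,d)=1$ and $c\equiv 0\pmod N$, and the whole series is invariant under $z_1\mapsto z_1+1$; hence it admits a Fourier expansion in $e^{2\pi i r z_1}$. First I would separate the contribution of the identity coset ($c=0$) from that of the cosets with $c>0$. The $c=0$ term contributes the pure exponential that is the first term on the right-hand side, while every coset with $c>0$ feeds into the coefficients $p_{r',N}(r)$.

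For the $c>0$ part I would fix a positive multiple $c$ of $N$ and group cosets by the residue $d_0\equiv d\pmod c$ with $\gcd(d_0,c)=1$. Writing $d=d_0+mc$ with $m\in\mathbb{Z}$ and using the standard identity $\gamma z_1=\tfrac{a}{c}-\tfrac{1}{c(cz_1+d)}$, the sum over $m$ together with the Fourier integral $\int_0^1(\,\cdot\,)e^{-2\pi i r x_1}\,dx_1$ unfolds into a single integral over the whole real line. The arithmetic data enter only through the phases $e^{-2\pi i r'a/c}$ and $e^{2\pi i r d_0/c}$ subject to $ad_0\equiv 1\pmod c$, so summing over residues $d_0\bmod c$ produces exactly the Kloosterman sum $K(-r,r';c)$ divided by a power of $c$.

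What remains is the archimedean integral
$$\int_{-\infty}^{\infty}\frac{\exp\!\left(\tfrac{2\pi i r'}{c(cz_1+d)}\right)}{(cz_1+d)^2\,|cz_1+d|^{2s-2}}\,e^{-2\pi i r x_1}\,dx_1,$$
which, after the substitution $u=cz_1+d$ followed by $t=-\tfrac{2\pi i r}{c}u$, turns in the limit $s\to 1$ into precisely the contour integral $\tfrac{z}{4\pi i}\int t^{-2}e^{t\pm z^2/4t}\,dt$ quoted in the statement, with $z=\tfrac{4\pi\sqrt{rr'}}{c}$. This is the step I expect to be the main obstacle: one must perform the $x_1$-integration, justify the analytic continuation to $s=1$ (which is where the factor $|cz_1+d|^{2s-2}$ is discarded and where the $y$-dependence of the general-$s$ coefficient collapses to the single exponential), and keep track of the branch. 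The sign of $r'$ is exactly what fixes the sign of $z^2/4t$: a positive $r'$ gives $rr'>0$, hence a real argument and the $+$ sign, i.e.\ the modified Bessel function $I_1$; a negative $r'$ gives $rr'<0$, hence the $-$ sign and the ordinary Bessel function $J_1$. The common prefactor $-2\pi\sqrt{r/r'}$ emerges from the Jacobians of the two substitutions together with the normalisation of the Bessel function. Finally, summing over $c\equiv 0\pmod N$ converges absolutely: using $I_1(x),J_1(x)=O(x)$ as $x\to 0$ one obtains a bound of the shape $\sum_c|K(-r,r';c)|\,c^{-2}$, which converges by the Weil estimate (\ref{Weil}).
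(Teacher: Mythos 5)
The paper does not actually prove this lemma: it is stated as a known fact, with the surrounding text declaring such Fourier-coefficient computations ``rather standard'' and pointing to Weil and Kuznetsov for the relevant expansions. Your proposal supplies exactly the standard unfolding argument that the paper implicitly relies on --- separating the $c=0$ coset, parametrising the remaining cosets by $(c,d)$ with $d=d_0+mc$, converting the $m$-sum plus the Fourier integral into an integral over $\mathbb{R}$, extracting the Kloosterman sum $K(-r,r';c)$ from the residues $d_0 \bmod c$, and identifying the archimedean integral with the stated contour representation of $I_1$ or $J_1$ according to the sign of $r'$ --- and you correctly isolate the one genuinely delicate step, namely the continuation to $s=1$ where the Whittaker-type coefficient of the general-$s$ series collapses to a pure exponential times a Bessel function. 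The outline is correct and is the argument the paper omits; the only thing you would additionally have to confront when writing it out in full is the paper's internally inconsistent sign convention (the definition uses $e^{-2\pi i r'\gamma z_1}$ while the lemma's leading term is $e^{+2\pi i r' z_1}$), which forces a choice of normalisation before the $I_1$ versus $J_1$ dichotomy can be pinned down unambiguously.
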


Now we will evaluate the Fourier coefficients of the function $\Xi_N(z_1,z_2)$.
 \begin{theorem}\label{thm4} Let $p=e^{2\pi i z_1}$, $q=e^{2 \pi i z_2}$, $\widetilde{q}=e^{2 \pi i \bar{z_2}}$, then
 \begin{equation} \dfrac{1}{2\pi i}~\Xi_N^{\ast}(z_1, z_2)(z_2-\bar{z_2}) =  \sum_{r=0}^{\infty} e_{r, N} ~p^r  + \sum_{r>0, r'>0} p_{r', N}(r)~p^r q^{r'} - \sum_{r>0, r'<0} p_{r', N}(r)~ p^r ~\widetilde{q}^{r'}.  \label{Fur}
\end{equation}

\end{theorem}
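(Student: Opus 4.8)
The plan is to obtain (\ref{Fur}) by unfolding the defining series over the cusp stabilizer $\Gamma_\infty$ and recognizing the resulting $z_2$-Fourier coefficients (functions of $z_1$) as the very Eisenstein and Poincaré series whose own $z_1$-expansions are recorded in (\ref{Eisenst}) and (\ref{Poincare}). First I would record the factorizations $\mu_\gamma(z_1,-z_2)=(cz_1+d)(\gamma z_1-z_2)$ and $\mu_\gamma(z_1,-\bar z_2)=(cz_1+d)(\gamma z_1-\bar z_2)$, together with the partial-fraction identity $\frac{1}{(\gamma z_1-z_2)(\gamma z_1-\bar z_2)}=\frac{1}{z_2-\bar z_2}\bigl(\frac{1}{\gamma z_1-z_2}-\frac{1}{\gamma z_1-\bar z_2}\bigr)$. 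Multiplying $\Xi_N$ by $(z_2-\bar z_2)$ then cancels the denominator $z_2-\bar z_2$ and leaves $\frac{1}{2}\sum_{\gamma}(cz_1+d)^{-2}\bigl(\frac{1}{\gamma z_1-z_2}-\frac{1}{\gamma z_1-\bar z_2}\bigr)$.

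Next I would group the sum over $\Gamma_0(N)$ into cosets $\Gamma_\infty\backslash\Gamma_0(N)$; inside a coset the automorphy factor $(cz_1+d)$ is constant while $\Gamma_\infty$ acts by $\gamma z_1\mapsto\gamma z_1+n$. Summing the two simple fractions over $n\in\mathbb{Z}$ by the Lipschitz (cotangent) formula collapses each coset to $\frac{\pi}{(cz_1+d)^2}\bigl(\cot\pi(\gamma z_1-z_2)-\cot\pi(\gamma z_1-\bar z_2)\bigr)$. Since $\Im(\gamma z_1-\bar z_2)>0$ always, while $\Im(\gamma z_1-z_2)<0$ for the non-identity cosets once $\Im z_2$ is large, the two cotangents expand in geometric series producing only $q^{r'}$ with $r'>0$ from the first and $\widetilde q^{r'}$ with $r'<0$ from the second. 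Collecting the coefficient of $q^{r'}$ for $r'>0$ reproduces exactly $\frac12\sum_{\Gamma_\infty\backslash\Gamma_0(N)}\frac{e^{-2\pi i r'\gamma z_1}}{(cz_1+d)^2}=P^\infty_{N,r'}(z_1)$, the coefficient of $\widetilde q^{r'}$ for $r'<0$ reproduces $P^\infty_{N,r'}(z_1)$ (with the sign convention of (\ref{Poincare}), contributing with an overall minus), and the part independent of $z_2$ reproduces the weight-two Eisenstein series $E^\infty_{2,N}(z_1)$.

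Substituting the $p$-expansions of these coefficient functions then yields (\ref{Fur}) directly: the $z_2$-constant part contributes $\sum_r e_{r,N}p^r$ from (\ref{Eisenst}), the $q^{r'}$ part contributes $\sum_{r>0}p_{r',N}(r)p^r q^{r'}$ from (\ref{Poincare}), and the $\widetilde q^{r'}$ part contributes $-\sum_{r>0}p_{r',N}(r)p^r\widetilde q^{r'}$ from its $r'<0$ analogue. The Kloosterman sums and the Bessel factors $I_1,J_1$ in (\ref{Poincare}) are inherited wholesale, so no further computation of Fourier coefficients is needed at this stage.

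The hard part will be legitimizing these manipulations, since the series for $\Xi_N$ is only conditionally convergent at $s=1$, so neither the $\Gamma_\infty$-summation nor its interchange with the $q,\widetilde q$-expansion is licensed there. I would therefore carry out every step for the regularized kernel $\Xi_N(z_1,z_2,s)$ with $\Re(s)>1$, where the regularizing factors restore absolute convergence and the summation over $\Gamma_\infty$ produces Bessel/Whittaker functions rather than bare exponentials (which is precisely why (\ref{Poincare}) carries $I_1$ and $J_1$), and only at the end let $s\to1$, invoking the analytic continuation of Theorem (\ref{T3}) and the Weil bound (\ref{Weil}) to pass to the limit coefficient by coefficient. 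The remaining delicacy is the identity coset, whose cotangent expansion carries the Cauchy singularity $\frac{1}{z_1-z_2}$ responsible for the asymptotics $\Xi_N\sim\frac{1}{z_1-z_2}$; this diagonal principal contribution is exactly what the notation $\Xi_N^\ast$ isolates, so that the starred kernel has the clean holomorphic-in-$z_1$ expansion (\ref{Fur}), while the precise sign and regime bookkeeping in the two cotangent expansions is routine.
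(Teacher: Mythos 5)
Your proposal is correct and follows essentially the same route as the paper: the partial-fraction step is exactly the paper's identity $\mu_\gamma(z_1,\bar z_2)-\mu_\gamma(z_1,z_2)=(cz_1+d)(\bar z_2-z_2)$, the grouping into $\Gamma_\infty$-cosets is the paper's parametrization $a=a_0+nc$, $b=b_0+nd$, and the Lipschitz/cotangent summation is the $s=1$ specialization of the Kronecker--Eisenstein expansion of $\sum_n(\bar z+n)/|z+n|^{2s}$ that the paper takes from Weil, after which both arguments identify the $q$- and $\widetilde q$-coefficients with $E_N^\infty$ and $P_{N,r'}^\infty$ and substitute their known $p$-expansions. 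Your remark that everything should be done at $\Re(s)>1$ and continued to $s=1$ is precisely the regularization the paper uses.
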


 \emph{Proof of Theorem 5.}

 \begin{enumerate} [(i)]
   \item Note that $\mu_\gamma(z_1, \bar{z_2})-\mu_\gamma(z_1, z_2) = (cz_1+d)(\bar{z_2}-z_2)$, therefore
       \begin{multline} \Xi_1^{\ast}(z_1, z_2)(z_2-\bar{z_2}) =
          ~\frac{1}{2} \sum_{\gamma \in \Gamma_0(N)} \frac{\overline{\mu_\gamma(z_1, z_2)}\overline{\mu_\gamma(z_1,
           \bar{z_2})}(\bar{z_2}-z_2)}{|\mu(z_1, z_2)|^{2s}|\mu_\gamma(z_1, \bar{z_2})|^{2s}}=\\
           =~\frac{1}{2} \sum_{\gamma \in \Gamma_0(N)}\frac{1}{(cz_1+d)}\left(\frac{\overline{\mu_\gamma(z_1, z_2)}}{|\mu_\gamma(z_1, z_2)|^{2s}|\mu_\gamma(z_1, \bar{z_2})|^{2s-2}}-
           \frac{\overline{\mu_\gamma(z_1, \bar{z_2})}}{|\mu_\gamma(z_1, z_2)|^{2s-2}| \mu_\gamma(z_1, \bar{z_2})|^{2s}}\right).  \label{DiffXi}
       \end{multline}
 It can be easily shown that the limit of the right-hand side of (\ref{DiffXi}) as $s$ approaches 1 equals to the limit
            \begin{equation}
              \Xi_N^{\ast}(z_1, z_2)(z_2-\bar{z_2})=\lim_{s\rightarrow 1}~\frac{1}{2}\sum_{\gamma \in \Gamma_0(N)} \frac{1}{(cz_1+d)}\left( \frac{\overline{\mu_\gamma(z_1,
              \bar{z_2})}}{|\mu_\gamma(z_1, \bar{z_2})|^{2s}} - \frac{\overline{\mu_\gamma(z_1, z_2)}}{|\mu_\gamma(z_1, z_2)|^{2s}} \right).
              \label{Xiasdiff}
          \end{equation}
   \item First we find the coefficients of the Fourier expansion of the inner series in (\ref{Xiasdiff}) in powers of $q = e^{2 \pi i z_2} $ and $ \widetilde {q} = e^ {2 \pi i \bar{z_2} } $.
  For the fixed $c, d$, with $(c, d) =1$, all the pairs of integers $a, b$, such that $ad-bc=1$,  have the form $a=a_0+nc$, $b=b_0+nd$, where $a_0$, $b_0$ are some fixed solutions. Hence,
        \begin{equation}
          \sum_{\gamma \in \Gamma_0(N)}\frac{\overline{\mu_\gamma(z_1, \bar{z_2})}}{(cz_1+d)|\mu_\gamma(z_1, \bar{z_2})|^{2s}}= \sum_{\substack{c, d \in \mathbb{Z}, \\ c \equiv 0 \pmod{N}\\ (c,d)=1}}\frac{(c\bar{z_1}+d)^2}{|cz_1+d|^{2s+2}}\sum_{n =
          -\infty}^{+\infty}\frac{\left(\bar{z_2}-\frac{a_0\bar{z_1}+b_0}{c\bar{z_1}+d}+n\right)}{\left|z_2-\frac{a_0z_1+b_0}{cz_1+d}+n\right|^{2s}}.
          \label{partFourier1}
      \end{equation}
        Using the Fourier expansion of the sum $\sum_{n \in \mathbb {Z}} \frac {(\bar {z} + n)} {|z + n|^{2s}}$ (see \cite{We1}), we get:
       \begin{equation}
       \lim_{s \to 1} \sum_{n=-\infty}^{+\infty}\frac{\left(\bar{z_2}-\frac{a_0\bar{z_1}+b_0}{c\bar{z_1}+d}+n\right)}{\left|z_2-\frac{a_0z_1+b_0}{cz_1+d}+n\right|^{2s}}=-2\pi i\left(\frac{1}{2}+ \sum_{r'>0} e^{2 \pi i r' z_2}e^{- 2 \pi i r' \frac{a_0z_1+b_0}{cz_1+d}}\right). \label{Kronec1}
       \end{equation}
From (\ref{Kronec1}) and (\ref{partFourier1}) and similar computations for the first series in (\ref{Xiasdiff}), it immediately follows that
  \begin{multline}
\Xi_N(z_1,z_2)(z_2-\bar{z_2})=2 \pi i  \lim_{s\rightarrow 1} \left(E_N^{\infty}(z_1, s) + \sum_{r'> 0} P^{\infty}_{N, r'}(z_1, s)~q^{r'}- \sum_{r'<
 0}P^{\infty}_{N, r'}(z_1, s)~\widetilde{q}^{r'} \right).  \label{XiSUM}
\end{multline}
 \end{enumerate}
\begin{flushright}
  $\blacksquare$
\end{flushright}

Using the theorem \ref{thm4} and the properties of the Poincare series, it can be obtained that the function $\Xi_N(z_1,z_2)(z_2-\bar{z_2})$, viewed as
a function of the first variable $z_1$,  has a simple pole on the curve $D_N=\left\{(z_1, z_2) \in \mathbb{H} \times \mathbb{H}|~ z_2=\gamma z_1, ~\gamma \in \Gamma_0(N)  \right\}$ and for all $z_2 \in \mathbb{H}$ it is equal to zero in all cusps of $\Gamma_0(N)$. \\

\emph{Proof of Theorem 3.}\\
Now we assume that the genus of $\Gamma_0(N)$ is zero. The normalized function $J_{\Gamma_0(N)}(p)$ is a modular function of weight 0 with a simple pole with the residue 1 in $i\infty$ and holomorphic in $\mathbb{H}$. Hence the meromorphic 1-form $ d_{z_1} \log ~ |J_{\Gamma_0(N)}(p) -J_{\Gamma_0(N)}(q)|^2 $ has simple poles at the points $z_1 =\gamma z_2$ and at the $z_1=i\infty$. Thus, the differential forms in the left-hand side and in the right-hand side of (\ref{T2}) have the same poles with the same residues. As a consequence, the difference of these differential forms is a holomorphic 1-form invariant under the modular transformation. Therefore, it is equal to zero.
\begin{flushright}
$\blacksquare$
\end{flushright}

It is not difficult to show that for $r'<0 $ the Poincare series $\lim_{s \rightarrow 1} P_{r'} (z_1, 2, s + 1)$ is a cusp form of weight 2, and therefore, for the genus zero group $\Gamma_0 (N)$ it is equal to zero. In this case we can prove the formula (\ref {Bo}) of an infinite Borcherds product.\\

\emph{Proof of Theorem 4.} \\
Let $p_{r', N}(r)$ -- is the n-th Fourier coefficient of the Poincare series with the parameter $r'>0$ given by the formula (\ref{Poincare}).

From the Fourier expansion of $\Xi_N (z_1, z_2)$ it follows that
\begin{multline}
(2 \pi i)^{-1}~\Xi_N(z_1, z_2)(z_2-\bar{z_2})- E_{N}^{\infty}(z_1, s)=\\= \sum_{r>0}^{\infty} p^{-r}q^{r} - \sum_{r, r'>0}2\pi \sqrt{\frac{r}{r'}} \sum_{\substack{c \equiv 0\pmod{N} \\ c>0}}\frac{
                K(-r,r';c)}{c}I_1\left(\frac{4\pi \sqrt{rr'}}{c}\right)p^rq^{r'}=\\ \sum_{r>0}^{\infty} p^{-r}q^{r} -  \sum_{r, r'>0}2\pi \sqrt{\frac{r}{r'}}
\sum_{\substack{c \equiv 0\pmod{N} \\ c>0}} \sum_{0<m|(r,r', c)}\frac{K\left(-\frac{rr'}{m^2},1;\frac{c}{m}\right)}{c/m}I_1\left(\frac{4\pi \sqrt{rr'}}{c}\right)p^rq^{r'}  \label{docBo}
 \end{multline}
In the last equality there was used  the Selberg identity for the Kloosterman sums \cite{Ku}:
 \begin{lemma} \begin{equation} K(r, r'; c)= \sum_{0< m| (r, r',c)} mK\left(\frac{rr'}{m^2}, 1; \frac{c}{m}\right).\end{equation}
 \end{lemma}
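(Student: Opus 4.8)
The goal is to establish the Selberg identity $K(r, r'; c) = \sum_{0 < m \mid (r, r', c)} m\, K(rr'/m^2, 1; c/m)$, starting from the definition $K(a, b; c) = \sum_{x \bmod c,\ (x,c)=1} e^{2\pi i (ax + b\bar{x})/c}$, where $\bar{x}$ denotes the inverse of $x$ modulo $c$ (this is the paper's definition with $x = m$, $\bar{x} = m^{\ast}$). I would first record the two elementary symmetries on which the whole argument rests: the symmetry $K(a,b;c) = K(b,a;c)$, obtained by replacing $x$ with $\bar{x}$, and the \emph{unit-twist invariance} $K(a, b; c) = K(\lambda a, \bar{\lambda} b; c)$ for any $\lambda$ coprime to $c$, obtained by replacing $x$ with $\lambda x$. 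Choosing $\lambda = \bar{a}$ when $(a,c)=1$ collapses the twist to $K(a,b;c) = K(ab, 1; c)$; in particular, whenever $(r,c)=1$ or $(r',c)=1$ one has $(r,r',c)=1$ and the identity reduces to its single term $m=1$, so only the case $(r,r',c)>1$ carries content.

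The plan is then to reduce to prime-power modulus and compute directly. Kloosterman sums are multiplicative in the modulus up to unit twists: for $(c_1,c_2)=1$, the CRT decomposition $x \leftrightarrow (x_1,x_2)$ factors $K(a,b;c_1c_2)$ as a product of Kloosterman sums to moduli $c_1$ and $c_2$ with arguments twisted by the (invertible) cofactors. I would check that the right-hand side factors compatibly: writing $(r,r',c) = \prod_p p^{g_p}$ and $c = \prod_p p^{k_p}$, the divisor sum over $m = \prod_p p^{j_p}$ splits as a product over $p$ of local sums $\sum_{j_p=0}^{g_p} p^{j_p} K(\,\cdot\,; p^{k_p - j_p})$, using multiplicativity of $K(\cdot, 1; \cdot)$ on each term. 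Matching the two product decompositions, the identity for $c$ follows once it is known for every prime power $p^{k}$.

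For $c = p^k$ with $\alpha = v_p(r) \le \beta = v_p(r')$, I would evaluate the left side using the modulus reduction made available by twist invariance. Writing $r = p^\alpha r_0$, $r' = p^\beta r_0'$ with $(r_0 r_0', p)=1$, the summand depends on the unit $x$ only modulo $p^{k-\alpha}$, so summing over units modulo $p^k$ (each class below is hit $\varphi(p^k)/\varphi(p^{k-\alpha}) = p^\alpha$ times) gives $K(r,r';p^k) = p^{\alpha} K(r_0, p^{\beta-\alpha} r_0'; p^{k-\alpha})$; since $(r_0, p)=1$, twist invariance turns this into the single term $p^\alpha K(rr'/p^{2\alpha}, 1; p^{k-\alpha})$, which is exactly the $j=\alpha$ summand on the right. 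It then remains to show that the summands with $j < \alpha$ contribute nothing: each is a Kloosterman sum $K(p^e u, 1; p^l)$ with $p \mid p^e u$ and, in the generic range $\alpha < k$, modulus $p^l$ with $l = k-j \ge 2$, and a short character computation shows these degenerate sums vanish.

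The main obstacle is precisely the boundary bookkeeping hidden in that last step. When $\alpha = k$ or $\beta = k$, that is, when $p^k$ divides $r$ or $r'$, the reduction degenerates and the terms $K(p^e u, 1; p^l)$ reach modulus $l=1$, where they equal the Ramanujan sum $-1$ rather than vanishing; these nonzero contributions are exactly what the full divisor sum on the right is arranged to absorb, so that the \emph{sum}, rather than a single term, reproduces the left side. Treating these low-modulus cases cleanly, and confirming that the surviving Ramanujan-sum terms assemble correctly across all divisors $m$, is the delicate part, while everything else is the routine application of the two symmetries and of multiplicativity.
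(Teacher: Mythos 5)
Your proposal is correct in outline, but note that the paper does not prove this lemma at all: it simply cites the Selberg identity from Kuznetsov's paper \cite{Ku}, so there is no internal proof to compare against. What you sketch is the standard local argument, and its skeleton is sound: the twist invariance $K(a,b;c)=K(\lambda a,\bar\lambda b;c)$, the reduction to $K(ab,1;c)$ when $(a,c)=1$, twisted multiplicativity under coprime factorization of the modulus (one does need to verify, as you say, that the unit twists $\overline{(c_2/m_2)}^2$ appearing on the right-hand side match those on the left, which they do), and the prime-power computation. Your key vanishing claim is correct: for $e\ge 1$ and $l\ge 2$ one writes $x=y+p^{l-1}z$ and the inner sum over $z$ produces the condition $p\mid \bar y^{\,2}$, which is impossible, so $K(p^e u,1;p^l)=0$; this kills all terms $j<\alpha$ when $\alpha=v_p(r)<k$ and leaves exactly the $j=\alpha$ term, which equals $p^{\alpha}K(r_0,p^{\beta-\alpha}r_0';p^{k-\alpha})$ as obtained from the left side by collapsing the sum over units. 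The boundary case you flag is genuinely the only delicate point and it does close up: when $p^k\mid(r,r')$ the left side is $K(0,0;p^k)=\varphi(p^k)$, while on the right the terms $j\le k-2$ vanish, the term $j=k-1$ is the Ramanujan sum contribution $-p^{k-1}$, and the term $j=k$ gives $p^k$, summing to $\varphi(p^k)$. Two small imprecisions: the degenerate regime is governed by $\min(v_p(r),v_p(r'))$ relative to $k$, not by either valuation separately (your ``or'' should effectively be an ``and''), and the counting factor $\varphi(p^k)/\varphi(p^{k-\alpha})=p^{\alpha}$ requires $k-\alpha\ge 1$, which is exactly why that case must be split off. With those points made explicit the argument is complete, and it is more self-contained than the paper, which relies entirely on the external reference.
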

  If $N=dd'$, $m=d m_0$, $c/m=d' c_0$ and $r/m_0=r_0$, $r'/m_0=r'_0$, then, continuing the equality (\ref{docBo}), get
                \begin{multline} \sum_{r>0}^{\infty} p^{-r}q^{r} - 2\pi \sum_{d |N} \sum_{\substack{m_0 >0 \\ c_0 >0}} \sum_{r_0,r_0'>0}  \sqrt{\frac{r_0}{r_0'}}~ \frac{K\left(-\frac{r_0r_0'}{d^2},1; \frac{c_0 N}{d}\right)}{c_0 N/d}~I_1\left(\frac{4\pi \sqrt{r_0r'_0}}{N c_0}\right)p^{r_0m_0}q^{r'_0m_0}=\\
                =- \frac{q}{q-p} -     \sum_{d |N} \sum_{r_0,r_0'>0} \sum_{m_0 >0} \frac{d^2}{r_0'}~p_{1, N}\left(r_0r_0'/d^2\right)p^{r_0m_0}q^{r'_0m_0}=\\
                =- \frac{q}{q-p} + \frac{d}{dz_1} \sum_{r, r'>0}\sum_{d|(r, r', N)}  \frac{d^2}{rr'}~p_{1, N}\left(rr'/d^2\right) \log \left(1-p^rq^{r'} \right) =\\
\frac{d}{dz_1}\log  \left(\frac{1}{p} - \frac{1}{q} \right) \prod_{r,r'>0}\prod_{d|(r, r',N)}\left(1-p^rq^{r'} \right)^{p_{1, N}\left(rr'/d^2\right)\cdot d^2/rr'}\label{JJ}\
        \end{multline}
 This completes the proof.
 \begin{flushright}
   $\blacksquare$
 \end{flushright}


\bibliographystyle{amsplain}

\end{document}